\documentclass{amsart}
\usepackage {amsmath, amscd}
\usepackage {amssymb}
\usepackage {epsfig}
\usepackage {bm}
\usepackage {indentfirst} 
\usepackage{color}
\usepackage{tikz-cd}

\usepackage[active]{srcltx}


\numberwithin{equation}{section}

\def\<{\langle}
\def\>{\rangle}

\def\LL{{\mathcal L}}

\def\bbD{\mathbb{D}}

\def\1{\mathbf{1}}

\newtheorem{lem}{Lemma}[section]
\newtheorem{prop}[lem]{Proposition}
\newtheorem{thm}[lem]{Theorem}
\newtheorem{corollary}[lem]{Corollary}

\theoremstyle{definition}
\newtheorem{rmk}[lem]{Remark}


\title{The Generalized Crofoot Transform }

\author{Rewayat Khan}
\address{Abdus Salam School of Mathematical Sciences, GC University, Lahore, Pakistan}
\email{rewayat.khan@gmail.com}


\subjclass{Primary 47B35, 47A45, Secondary 47B32, 30J05}
\keywords{Generalized Crofoot tranform, Conjugation}

\begin{document}
	\maketitle

\begin{abstract}
	 We introduce a generalized Crofoot transform between the model spaces corresponding to matrix-valued  inner functions.
 As an application, we obtain results about matrix-valued truncated Toeplitz operators.
\end{abstract}

\section{Introduction}

The theory of completely nonunitary contractions on a Hilbert space, as developed in~\cite{NF}, 
provides functional models for arbitrary completely nonunitary contractions. 
In the particular case when the dimensions of the defect spaces of the contraction (to be defined below) is 1 and the contraction is stable, the model space is the function space $ H^2\ominus \theta H^2 $, where $ H^2 $ is the Hardy--Hilbert space and $ \theta $ is an inner function. These spaces are often called shortly \emph{model spaces} and have been the object of extensive study in the last decades. In particular, a direction of study initiated in~\cite{Sa} deals with the so-called \emph{truncated Toeplitz operators}, which are compression to model spaces of multiplication operators. 
The Crofoot transform, introduced in~\cite{Cr}, is a useful tool for transferring properties between model spaces and between the associated spaces of truncated Toeplitz operators.

A more general type of model space is obtained when the scalar inner function is replaced by a matrix-valued inner function $ \Theta $. Then the space $K_\Theta=H^2(E)\ominus \Theta H^2(E)$, with $E$ a
finite dimensional Hilbert space. In this context, matrix valued truncated Toeplitz operators and their properties
has been formally introduced in \cite{RT}. 

The current paper introduces the generalization of the Crofoot transform to the model spaces associated to matrix-valued inner functions. As an application, we investigate the behaviour of the space of matrix-valued truncated Toeplitz operators with respect to this transformation.

The structure of the paper is the following.
After a section of general preliminaries about spaces of vector and matrix valued functions, we give a primer of the
properties of the vector-valued  model spaces and models operators. The generalized Crofoot transformation and its link to matrix valued truncated Toeplitz operators is defined
in Section 3. In Section 4 we investigate the case when the matrix-valued inner function is complex symmetric.
 
One should note that the generalized Crofoot transform that we introduce is related to the study of perturbations  of contractions as appearing in~\cite{BL, BT1, BT2, Fu}. However, we work here in a concrete framework and we obtain explicit results for all the transformations involved.

\section{Preliminaries}

Let $\mathbb{C}$ denote the complex plane, $\mathbb{D}={\{z\in\mathbb{C}: |z|<1}\}$ the unit disc, $\mathbb{T}={\{z\in\mathbb{C}:|z|=1}\}$ the unit circle.
Throughout the paper $\mathbb{C}^{d}$ will denote $d$ dimensional complex Hilbert space, and $\LL(\mathbb{C}^d)$ the algebra of bounded linear operators on $\mathbb{C}^{d}$, which  may be identified with $d\times d$ matrices.

The space $L^{2}(\mathbb{C}^{d})$  is defined,  as usual, by
\[
L^{2}(\mathbb{C^{d}})={\Big\{f:\mathbb{T}\to \mathbb{C}^{d}: f(e^{it})=\sum\limits_{-\infty}^{\infty}a_{n}e^{int}:  a_{n}\in \mathbb{C}^{d},\quad   \sum\limits_{-\infty}^{\infty}\|a_{n}\|^{2}<\infty }\Big\},
\]
endowed with the inner product
\begin{equation*}
\langle f,g\rangle_{L^{2}(\mathbb{C}^{d})}=\frac{1}{2\pi}\int\limits_{0}^{2\pi}\langle f(e^{it}),g(e^{it})\rangle_{\mathbb{C}^{d}}\,dt.
\end{equation*}

The Hardy space  $H^{2}(\mathbb{C}^{d})$ is the subspace of $L^2(\mathbb{C}^{d})$ formed by the functions with vanishing negative Fourier coefficients; it can be identified with a space of $\mathbb{C}^{d}$-valued  functions analytic in $\bbD$, from which  the boundary values can be recovered almost everywhere through radial limits.

Let $S$ denote the forward shift operator $(Sf)(z)=zf(z)$ on $H^{2}(\mathbb{C}^{d})$; it is the restriction of $ M_z  $, the multiplication with the variable $ z $, to $H^{2}(\mathbb{C}^{d})$. Its adjoint (the backward shift) is the operator \[(S^{*}f)(z)=\frac{f(z)-f(0)}{z}.
\]

An \emph{inner function} will be an element $\Theta\in H^2(\LL(\mathbb{C}^{d})$ whose boundary values are almost everywhere unitary operators (equivalently, isometries or coisometries) in $\LL(\mathbb{C}^d)$.
 All niner functions in the  sequel are assumed to be pure, that is $ \|\Theta(0)\|<1 $.

The  \emph{model space} associated to $\Theta$, denoted by $K_{\Theta}$,  is defined by $K_{\Theta}=H^{2}(\mathbb{C}^{d})\ominus \Theta H^{2}(\mathbb{C}^{d})$; the orthogonal projection onto $K_\Theta$ will be denoted by $P_\Theta$. The properties of the model space are familiar to many analysts in the scalar case.  On the other hand, the vector valued version is less widely known (despite playing an important role in the Sz.-Nagy--Foias theory of contractions~\cite{NF}).

The
 model space $K_{\Theta}$  is  a vector valued reproducing kernel Hilbert space; its reproducing  kernel function, which takes values in $\LL(\mathbb{C}^d)$, is
 \[
 k_{\lambda}^{\Theta}(z)=
 \frac{1}{1-\overline{\lambda}z}(I-\Theta(z)\Theta(\lambda)^{*}).
 \]
 This means that for any $x\in \mathbb{C}^{d}$ we have $k_{\lambda}^{\Theta}x\in K_{\Theta}$, and, if $f\in K_\Theta$, then
 \[
 \langle f, k_{\lambda}^{\Theta}x\rangle_{K_\Theta}=\< f(\lambda),x \>_{\mathbb{C}^{d}}.
 \]
We will  also have the occasion to consider a related family of functions, namely
\[
 \widetilde{k_{\lambda}^{\Theta}}(z)=\frac{1}{z-\lambda}(\Theta(z)-\Theta(\lambda
 )).
\]

The \emph{model operator} $S_{\Theta}\in\LL(K_{\Theta})$ is defined  by the formula
\begin{equation}\label{eq:Model Operator}
(S_{\Theta}f)(z)=P_{\Theta} (zf), \quad f\in K_{\Theta}.
 \end{equation}

The adjoint of $S_{\Theta}$  is given by
\[
(S_{\Theta}^{*}f)(z)=\frac{f(z)-f(0)}{z};
\]
it is the restriction of the left shift in $H^{2}(\mathbb{C}^d)$ to the $S^{*}$-invariant subspace $K_{\Theta}$.
 The action of $S_{\Theta}$ is more precisely described if we introduce the following subspaces of $K_{\Theta}$ (the defect spaces of $S_{\Theta}$ in the terminology of~\cite{NF}):
\begin{equation}\label{eq:definition of defect spaces}
\begin{split}
\mathcal{D}_{*}&=\Big\{\frac{1}{z}(\Theta(z)-\Theta(0))x:x\in \mathbb{C}^d \Big\}\\
\mathcal{D}&=\{(I-\Theta(z)\Theta(0)^{*})x:x\in \mathbb{C}^{d}\}.
\end{split}
\end{equation}
The action of $S_{\Theta}$ on $\mathcal{D}^{\perp}$, $\mathcal{D}$ and of $S_{\Theta}^{*}$ on $\mathcal{D}_{*}^{\perp}$, $\mathcal{D}_{*}$, are given by the formula's below:
\begin{equation}\label{eq: S Theta}
\begin{split}
 (S_{\Theta}^{*}f)(z)=\left\{
                     \begin{array}{ll}
                       \frac{f(z)}{z} ~~~ ~~~~~~~~~~~~~~~~~~~~~~~~~~~~~~~~ for ~f\in D^{\perp}, \\
                       -\frac{1}{z}\big(\Theta(z)-\Theta(0)\big)\Theta(0)^{*}x ~~~~~~~~~ for ~f=\big(I-\Theta(z)\Theta(0)^{*}\big)x\in D;
                     \end{array}
                   \right.\\
 (S_{\Theta}f)(z)=\left\{
                     \begin{array}{ll}
                       zf(z) ~~~ ~~~~~~~~~~~~~~~~~~~~~~~~~~~~~for ~f\in D_{*}^{\perp}, \\
                       -\big(I-\Theta(z)\Theta(0)^{*}\big)\Theta(0)x ~~~~~~~~~ for ~f=\frac{1}{z}\big(\Theta(z)-\Theta(0)\big)x\in D_{*}.
                     \end{array}
                   \right.
                   \end{split}
\end{equation}

We will use the following standard notation.  If $T\in \mathcal{L}(E)$
is a contraction, then the operators $D_{T}=(I-T^{*}T)^{\frac{1}{2}}$ and $D_{T^{*}}=(I-TT^{*})^{\frac{1}{2}}$ are called the defect operators
and $\mathcal{D}_{T}=\overline{D_{T}E}$ and $\mathcal{D}_{T^{*}}=\overline{D_{T^{*}}E}$ are called the defect spaces of $T$.

\section {Generalized Crofoot Transform }\label{se:GCT}
Let $\Theta(\lambda):\mathbb{C}^d\longrightarrow \mathbb{C}^d$ be a pure inner function and  $W$ a fixed strict contraction  acting on $\mathbb{C}^d$.
\begin{prop}
The function $\Theta^{\prime}$ defined in terms of inner function $\Theta$ and strict contraction $W$ given by
\begin{equation}\label{eq:formula for Theta'}
\Theta^{'}(\lambda)=-W+ D_{W^{*}}(I-\Theta(\lambda) W^{*})^{-1}\Theta(\lambda) D_{W}
\end{equation}
is a pure inner function.
\end{prop}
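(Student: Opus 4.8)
The plan is to exhibit $\Theta'$ as the linear-fractional (feedback) transform of $\Theta$ through a fixed unitary colligation built from $W$, and then to read off the three required properties—boundedness and analyticity on $\bbD$, unitarity of the boundary values, and purity—directly from the unitarity of that colligation.

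First I would record the two ingredients that make the construction legitimate. Since $W$ is a strict contraction on $\mathbb{C}^d$, the defect operators $D_W=(I-W^*W)^{1/2}$ and $D_{W^*}=(I-WW^*)^{1/2}$ are invertible, and one has the intertwining identities $WD_W=D_{W^*}W$ and $W^*D_{W^*}=D_WW^*$. Using these together with $D_W^2=I-W^*W$ and $D_{W^*}^2=I-WW^*$, a direct block computation shows that
\[
J=\begin{pmatrix} -W & D_{W^*} \\ D_W & W^* \end{pmatrix}
\]
is a unitary operator on $\mathbb{C}^d\oplus\mathbb{C}^d$. Next, because $\Theta(\lambda)$ is a contraction for $\lambda\in\bbD$ and $\|W\|<1$, we have $\|\Theta(\lambda)W^*\|<1$ there, so $(I-\Theta(\lambda)W^*)^{-1}$ exists and depends analytically on $\lambda$; hence $\Theta'$ is analytic and bounded on $\bbD$, i.e. $\Theta'\in H^2(\LL(\mathbb{C}^d))$. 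I would also note the resolvent identity $(I-\Theta W^*)^{-1}\Theta=\Theta(I-W^*\Theta)^{-1}$, which lets me rewrite $\Theta'$ in the standard feedback form associated to $J$.

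The heart of the argument is then an energy-balance computation. Fix a boundary point $\zeta$ at which $U:=\Theta(\zeta)$ is unitary, and set $u'=D_Wu+W^*y'$, $y'=Uu'$, $y=-Wu+D_{W^*}y'$. Solving the loop gives $y'=(I-UW^*)^{-1}UD_Wu$ and $y=\Theta'(\zeta)u$, exactly reproducing the formula for $\Theta'$. Since $J$ is unitary, $\|y\|^2+\|u'\|^2=\|u\|^2+\|y'\|^2$; and since $U$ is unitary, $\|y'\|=\|u'\|$. Subtracting yields $\|\Theta'(\zeta)u\|=\|u\|$ for every $u$, so $\Theta'(\zeta)$ is an isometry, hence (being a square matrix) unitary; thus $\Theta'$ is inner. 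For purity I would run the same bookkeeping at $\lambda=0$ with $U$ replaced by the strict contraction $\Theta(0)$ (recall $\Theta$ is pure): because $D_W$ is invertible, $u\neq 0$ forces $u'\neq 0$, whence $\|y'\|<\|u'\|$ and therefore $\|\Theta'(0)u\|<\|u\|$ for all $u\neq 0$; compactness of the unit sphere then gives $\|\Theta'(0)\|<1$.

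I expect the main obstacle to be organizational rather than deep: the one genuinely load-bearing computation is the verification that $J$ is unitary, which rests entirely on the intertwining relations $WD_W=D_{W^*}W$ and $W^*D_{W^*}=D_WW^*$. Once $J$ is known to be unitary, the isometry and strict-contraction conclusions are immediate from the feedback computation, and the only point needing a word of care is that the radial boundary values of the analytic function $\Theta'$ genuinely coincide with the formula evaluated at $\Theta(\zeta)$. This is unproblematic because $\|W\|<1$ keeps $(I-\Theta(\zeta)W^*)^{-1}$ uniformly bounded up to the boundary, so the defining expression is continuous in $\Theta$ there.
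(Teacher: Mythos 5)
Your proof is correct, but it takes a genuinely different route from the paper. The paper verifies innerness by brute force: it expands $\Theta'(e^{it})\Theta'(e^{it})^*$ and, through a page of algebraic manipulation with the resolvents $(I-\Theta W^*)^{-1}$, reduces the cross terms to $-D_{W^*}^2$, arriving at $\Theta'\Theta'^*=WW^*+D_{W^*}^2=I$; purity is explicitly left to the reader. You instead recognize $\Theta'$ as the transfer function of the feedback connection of $\Theta$ with the unitary colligation $J=\left(\begin{smallmatrix} -W & D_{W^*}\\ D_W & W^*\end{smallmatrix}\right)$, whose unitarity rests only on the intertwining relations $WD_W=D_{W^*}W$ and $W^*D_{W^*}=D_WW^*$. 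The energy balance $\|y\|^2+\|u'\|^2=\|u\|^2+\|y'\|^2$ then gives isometry of $\Theta'(\zeta)$ on the boundary (hence unitarity, the matrices being square) and, run at $\lambda=0$ with the strict contraction $\Theta(0)$ and the invertibility of $D_W$, gives $\|\Theta'(0)\|<1$ by compactness of the unit sphere. Your argument buys a conceptual explanation of \emph{why} the formula preserves innerness, a uniform treatment of the boundary identity and of purity (which the paper omits), and an explicit link to the colligation/perturbation point of view the paper only alludes to in the introduction; the paper's computation buys self-containedness and produces along the way the resolvent identities that are reused verbatim in the later proofs (Proposition~\ref{Eq: for Reproducing} and Theorem~\ref{GCT}). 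Both are complete; your remark that the boundary values of $\Theta'$ are obtained by applying the (uniformly continuous, since $\|(I-XW^*)^{-1}\|\le(1-\|W\|)^{-1}$) fractional map to the boundary values of $\Theta$ is the right way to dispose of the only delicate point.
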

\begin{proof}
Consider
\begin{align*}
\Theta^{\prime}(e^{it})\Theta^{\prime *}(e^{it})
&=[-W+ D_{W^{*}}(I-\Theta(e^{it}) W^{*})^{-1}\Theta(e^{it}) D_{W}]\\
&\qquad[-W^{*}+ D_{W}\Theta(e^{it})^{*}(I- W\Theta(e^{it})^{*})^{-1} D_{W^{*}}]\\
&=WW^{*}-WD_{W}\Theta^{*}(I-W\Theta^{*})^{-1}D_{W^{*}}-D_{W^{*}}(I-\Theta W^{*})^{-1}\Theta D_{W}W^{*}\\
&\qquad+D_{W^{*}}(I-\Theta W^{*})^{-1}\Theta D_{W}^{2}
\Theta^{*}(I-W\Theta^{*})^{-1}D_{W^{*}}\\
&=WW^{*}-D_{W^{*}}W\Theta^{*}(I-W\Theta^{*})^{-1}D_{W^{*}}-D_{W^{*}}(I-\Theta W^{*})^{-1}\Theta W^{*}D_{W^{*}}\\
&\qquad+D_{W^{*}}(I-\Theta W^{*})^{-1}\Theta D_{W}^{2}
\Theta^{*}(I-W\Theta^{*})^{-1}D_{W^{*}}\\
&=WW^{*}-D_{W^{*}}[W\Theta^{*}(I-W\Theta^{*})^{-1}-(I-\Theta W^{*})^{-1}\Theta W^{*}\\
&\qquad+(I-\Theta W^{*})^{-1}\Theta D_{W}^{2}
\Theta^{*}(I-W\Theta^{*})^{-1}]D_{W^{*}},
\end{align*}

We have
\begin{align*}
&W\Theta^{*}(I-W\Theta^{*})^{-1}-(I-\Theta W^{*})^{-1}\Theta W^{*}
+(I-\Theta W^{*})^{-1}\Theta D_{W}^{2}
\Theta^{*}(I-W\Theta^{*})^{-1}\\
&\qquad=W\Theta^{*}(I-W\Theta^{*})^{-1}-(I-\Theta W^{*})^{-1}\Theta W^{*}\\
&\qquad\qquad+(I-\Theta W^{*})^{-1}\Theta (I-W^{*}W)
\Theta^{*}(I-W\Theta^{*})^{-1}\\
&\qquad=W\Theta^{*}(I-W\Theta^{*})^{-1}-(I-\Theta W^{*})^{-1}\Theta W^{*}\\
&\qquad\qquad+(I-\Theta W^{*})^{-1}(I-W\Theta^{*})^{-1}+(I-\Theta W^{*})^{-1}\Theta W^{*}W
\Theta^{*}(I-W\Theta^{*})^{-1}\\
&\qquad=W\Theta^{*}(I-W\Theta^{*})^{-1}-(I-\Theta W^{*})^{-1}(I-W\Theta^{*})^{-1}\\
&\qquad\qquad+(I-\Theta W^{*})^{-1}\Theta W^{*}
+(I-\Theta W^{*})^{-1}\Theta W^{*}W
\Theta^{*}(I-W\Theta^{*})^{-1}\\
&\qquad=[W\Theta^{*}-(I-\Theta W^{*})^{-1}](I-W\Theta^{*})^{-1}\\
&\qquad\qquad+(I-\Theta W^{*})^{-1}\Theta W^{*}(I-W\Theta^{*})(I-W\Theta^{*})^{-1}\\
&\qquad\qquad+(I-\Theta W^{*})^{-1}\Theta W^{*}W
\Theta^{*}(I-W\Theta^{*})^{-1}\\
&\qquad=[W\Theta^{*}-(I-\Theta W^{*})^{-1}](I-W\Theta^{*})^{-1}\\
&\qquad\qquad+(I-\Theta W^{*})^{-1}[\Theta W^{*}(I-W\Theta^{*})
+\Theta W^{*}W\Theta^{*}](I-W\Theta^{*})^{-1}\\
&\qquad=[W\Theta^{*}-(I-\Theta W^{*})^{-1}](I-W\Theta^{*})^{-1}\\
&\qquad\qquad+(I-\Theta W^{*})^{-1}\Theta W^{*}
(I-W\Theta^{*})^{-1}\\
&\qquad=[W\Theta^{*}-(I-\Theta W^{*})^{-1}+(I-\Theta W^{*})^{-1}\Theta W^{*}
](I-W\Theta^{*})^{-1}\\
&\qquad=[W\Theta^{*}-(I-\Theta W^{*})^{-1}(I-\Theta W^{*})
](I-W\Theta^{*})^{-1}\\
&\qquad=(W\Theta^{*}-I
)(I-W\Theta^{*})^{-1}=-(I-W\Theta^{*})
(I-W\Theta^{*})^{-1}=-I.
\end{align*}

Therefore 
\[
\Theta^{\prime}(e^{it})\Theta^{\prime *}(e^{it})=WW^*+D_{W^*}^2= I,
\]
and so $ \Theta' $ is inner. We leave to the reader to check that $ \Theta' $ is pure.
\end{proof} 
\begin{rmk}\label{eq: formula for theta}
The function $\Theta$ can be obtained from $\Theta^{\prime}$ as
\[
\Theta(\lambda)=W+ D_{W^{*}}(I+\Theta'(\lambda) W^{*})^{-1}\Theta'(\lambda) D_{W}.
\]
\end{rmk}

Let $K_{\Theta}$ be the model space corresponding to inner function $\Theta$ and $K_{\Theta^{'}}$ be model space corresponding to $\Theta^{'}$. We introduce now the generalized Crofoot transformation between these spaces.

\begin{thm}(Generalized Crofoot transformation) \label{GCT} Let $W$ be a strict contraction, $\Theta$ a pure inner function, and suppose $ \Theta' $ is defined by~\eqref{eq:formula for Theta'}. Then
 the map
$J_{W}$   defined by
$$
J_{W}f=D_{W^{*}}(I-\Theta(\lambda)W^{*})^{-1}f
$$
is a unitary operator from $ K_{\Theta}$ to $K_{\Theta^{'}}$.
\end{thm}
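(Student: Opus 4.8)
The plan is to realize $J_W$ as multiplication by the operator-valued function $A(\lambda):=D_{W^{*}}(I-\Theta(\lambda)W^{*})^{-1}$ and to verify the three ingredients of unitarity in turn: that $A$ is a well-defined bounded analytic multiplier, that $M_A$ carries $K_\Theta$ isometrically into $K_{\Theta'}$, and that it is onto. First I would record the preliminary analytic facts. Since $W$ is a strict contraction and $\|\Theta(\lambda)\|\le 1$ on $\mathbb{D}$, we have $\|\Theta(\lambda)W^{*}\|\le\|W\|<1$ for $\lambda\in\mathbb{D}$, so the Neumann series $(I-\Theta W^{*})^{-1}=\sum_{n\ge0}(\Theta W^{*})^{n}$ converges uniformly and $A$ is bounded and analytic on $\mathbb{D}$; moreover $D_W,D_{W^{*}}$ are invertible and satisfy the standard defect intertwining relations $WD_W=D_{W^{*}}W$ and $W^{*}D_{W^{*}}=D_WW^{*}$, which I will use repeatedly.

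The isometry is the first substantive step. For $f\in K_\Theta$ put $g:=(I-\Theta W^{*})^{-1}f\in H^{2}(\mathbb{C}^{d})$, so that $f=g-\Theta W^{*}g$. From $A^{*}A=(I-W\Theta^{*})^{-1}D_{W^{*}}^{2}(I-\Theta W^{*})^{-1}$ on $\mathbb{T}$ together with $D_{W^{*}}^{2}=I-WW^{*}$ one gets $\|Af\|^{2}=\|g\|^{2}-\|W^{*}g\|^{2}$, and since $\Theta$ is unitary a.e.\ on $\mathbb{T}$ we have $\|W^{*}g\|=\|\Theta W^{*}g\|=\|g-f\|$. Expanding $\|g-f\|^{2}$ then reduces the claim $\|Af\|=\|f\|$ to the single identity $\langle g,f\rangle=\|f\|^{2}$. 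Here the model-space structure enters: $g-f=\Theta W^{*}g=\Theta\,(W^{*}g)$ with $W^{*}g\in H^{2}(\mathbb{C}^{d})$, so $g-f\in\Theta H^{2}(\mathbb{C}^{d})$ is orthogonal to $f\in K_\Theta$, whence $\langle g,f\rangle=\|f\|^{2}$. I would emphasize that $M_A$ is \emph{not} isometric on all of $H^{2}(\mathbb{C}^{d})$ (indeed $A^{*}A\ne I$ pointwise), so this use of $f\perp\Theta H^{2}(\mathbb{C}^{d})$ is essential rather than cosmetic.

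The second substantive step, and the one I expect to be the main obstacle, is showing $Af\in K_{\Theta'}$, that is $Af\perp\Theta'H^{2}(\mathbb{C}^{d})$. I would prove this by writing $\langle Af,\Theta'h\rangle=\langle f,A^{*}\Theta'h\rangle$ and establishing the boundary identity
\[
A^{*}\Theta'=(I-\Theta W^{*})^{-1}\Theta D_W=\Theta(I-W^{*}\Theta)^{-1}D_W .
\]
Verifying this is a somewhat delicate algebraic computation: substituting the definitions of $A^{*}$ and $\Theta'$, one clears the factor $D_{W^{*}}^{2}=I-WW^{*}$, uses $\Theta^{*}\Theta=I$ on $\mathbb{T}$ to introduce a $\Theta^{*}$ and collapse the factor $(I-W\Theta^{*})$, and repeatedly applies the defect intertwining relations. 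Granting the identity, the right-hand side is $\Theta$ times the bounded analytic function $(I-W^{*}\Theta)^{-1}D_W$, so $A^{*}\Theta'h\in\Theta H^{2}(\mathbb{C}^{d})$ for every $h\in H^{2}(\mathbb{C}^{d})$; orthogonality of $f\in K_\Theta$ to $\Theta H^{2}(\mathbb{C}^{d})$ then gives $\langle Af,\Theta'h\rangle=0$, and with $Af\in H^{2}(\mathbb{C}^{d})$ this yields $Af\in K_{\Theta'}$.

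Finally I would obtain surjectivity, and hence unitarity, from an explicit inverse. By Remark~\ref{eq: formula for theta} the pair $(\Theta',-W)$ produces $\Theta$ under the same construction, and the associated multiplier is $A':=D_{W^{*}}(I+\Theta'W^{*})^{-1}$. A short computation---expanding $I+\Theta'W^{*}$ and using $W^{*}D_{W^{*}}=D_WW^{*}$---shows $I+\Theta'W^{*}=D_{W^{*}}(I-\Theta W^{*})^{-1}D_{W^{*}}$, whence $A'A=I$ (and $AA'=I$ pointwise, as these are square matrices). Applying the two steps already proved to the pair $(\Theta',-W)$ shows that $M_{A'}$ maps $K_{\Theta'}$ isometrically into $K_\Theta$, while $A'A=AA'=I$ gives $M_{A'}M_A=I_{K_\Theta}$ and $M_AM_{A'}=I_{K_{\Theta'}}$. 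Thus $J_W=M_A|_{K_\Theta}$ is an isometry of $K_\Theta$ onto $K_{\Theta'}$ with inverse $J_{-W}=M_{A'}|_{K_{\Theta'}}$, i.e.\ a unitary operator, as claimed.
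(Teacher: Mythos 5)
Your proof is correct, and while its central computation coincides with the paper's, two of its three steps follow a genuinely different route. The range inclusion $J_WK_\Theta\subset K_{\Theta'}$ is proved exactly as in the paper: both arguments reduce to the boundary identity $(I-W\Theta^{*})^{-1}D_{W^{*}}\Theta'=\Theta(I-W^{*}\Theta)^{-1}D_W$ and then use the orthogonality of $f\in K_\Theta$ to $\Theta H^{2}(\mathbb{C}^{d})$. Where you diverge is in the isometry and the surjectivity. The paper proves isometry by first computing the images of the reproducing kernels under $J_W$ (Proposition~\ref{Eq: for Reproducing}), checking $\langle J_Wk^{\Theta}_\lambda x,J_Wk^{\Theta}_\mu y\rangle=\langle k^{\Theta}_\lambda x,k^{\Theta}_\mu y\rangle$, and invoking density of the span of kernels; your direct norm computation with $g=(I-\Theta W^{*})^{-1}f$, which reduces everything to $\langle g,f\rangle=\|f\|^{2}$ via $g-f\in\Theta H^{2}(\mathbb{C}^{d})$, is shorter and bypasses the kernel formulas entirely (the paper needs those formulas later for other purposes anyway). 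For surjectivity, the paper introduces $J_W'g=D_{W^{*}}(I+\Theta'W^{*})^{-1}g$, repeats the orthogonality computation to get $J_W'K_{\Theta'}\subset K_\Theta$, and then verifies $J_W'J_W=J_WJ_W'=I$ by a long manipulation that, as written, inverts $W^{*}$ and $\Theta$ and hence tacitly assumes their invertibility; your identity $I+\Theta'W^{*}=D_{W^{*}}(I-\Theta W^{*})^{-1}D_{W^{*}}$ gives $A'A=AA'=I$ with no such assumption, and recognizing $A'$ as the multiplier attached to the pair $(\Theta',-W)$ lets you recycle the already-proved inclusion step instead of redoing it. On both counts your argument is cleaner, and in the inverse computation it actually repairs a defect in the paper's own verification.
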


To prove Theorem \ref{GCT} we first prove the following proposition:
\begin{prop}\label{Eq: for Reproducing}
Let $y\in E$ and $\lambda\in \mathbb{D}$, then
\begin{equation}\label{eq:J(repro)}
J_{W}(k_{\lambda}^{\Theta}(I-W\Theta(\lambda)^{*})^{-1}D_{W^{*}}y)=k_{\lambda}^{\Theta^{\prime}}y,\quad
J_{W}(\widetilde{k_{\lambda}^{\Theta}}(I-W^{*}\Theta(\lambda))^{-1}D_{W}y)=\widetilde{k_{\lambda}^{\Theta^{\prime}}}y.
\end{equation}
\end{prop}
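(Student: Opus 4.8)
The plan is to substitute the definitions of $J_{W}$, of the kernels $k_{\lambda}^{\Theta}$ and $\widetilde{k_{\lambda}^{\Theta}}$, and of $\Theta'$ from~\eqref{eq:formula for Theta'}, and then reduce each of the two asserted equalities to a purely algebraic identity between $\LL(\mathbb{C}^d)$-valued functions of $z$ and $\lambda$. Write $A=\Theta(z)$ and $B=\Theta(\lambda)$ for brevity. For the first identity, applying $J_{W}$ to $k_{\lambda}^{\Theta}(I-WB^{*})^{-1}D_{W^{*}}y$ produces the common scalar factor $\frac{1}{1-\overline{\lambda}z}$, which also appears in $k_{\lambda}^{\Theta'}y$; cancelling it reduces the claim to $D_{W^{*}}(I-AW^{*})^{-1}(I-AB^{*})(I-WB^{*})^{-1}D_{W^{*}}=I-\Theta'(z)\Theta'(\lambda)^{*}$. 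I would expand the right-hand side using $\Theta'(z)=-W+D_{W^{*}}(I-AW^{*})^{-1}AD_{W}$ and $\Theta'(\lambda)^{*}=-W^{*}+D_{W}B^{*}(I-WB^{*})^{-1}D_{W^{*}}$, and rewrite $I-WW^{*}=D_{W^{*}}^{2}$, so that both sides acquire the form $D_{W^{*}}[\cdots]D_{W^{*}}$; it then suffices to verify the bracketed identity, which sidesteps any question about invertibility of $D_{W^{*}}$.

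The tools that make these brackets collapse are the intertwining relations $WD_{W}=D_{W^{*}}W$ and $W^{*}D_{W^{*}}=D_{W}W^{*}$ (both following from $W(I-W^{*}W)=(I-WW^{*})W$ via the functional calculus), the defect identities $D_{W}^{2}=I-W^{*}W$ and $D_{W^{*}}^{2}=I-WW^{*}$, and the resolvent identities $I+WB^{*}(I-WB^{*})^{-1}=(I-WB^{*})^{-1}$ and $I+(I-AW^{*})^{-1}AW^{*}=(I-AW^{*})^{-1}$. Using the intertwining relations to move every defect factor to the outside and $D_{W}^{2}=I-W^{*}W$ to eliminate the interior defect operator, I expect the bracket coming from $I-\Theta'(z)\Theta'(\lambda)^{*}$ to reorganize, after the two resolvent identities above, into exactly $(I-AW^{*})^{-1}(I-AB^{*})(I-WB^{*})^{-1}$, which gives the first equality.

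For the second identity the reduction is cleaner. After cancelling the common factor $\frac{1}{z-\lambda}$ and peeling off the outer $D_{W^{*}}$ and $D_{W}$—which match on both sides, since $\Theta'(z)-\Theta'(\lambda)=D_{W^{*}}[(I-AW^{*})^{-1}A-(I-BW^{*})^{-1}B]D_{W}$—the claim becomes $(I-AW^{*})^{-1}(A-B)(I-W^{*}B)^{-1}=(I-AW^{*})^{-1}A-(I-BW^{*})^{-1}B$. I would clear the factor $(I-W^{*}B)^{-1}$ by multiplying on the right by $(I-W^{*}B)$, and then use $(I-AW^{*})^{-1}(I-AW^{*})=I=(I-BW^{*})^{-1}(I-BW^{*})$, which collapses everything to the trivial $B-B=0$. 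The only genuine obstacle is the noncommutative bookkeeping in the first identity—keeping the resolvents $(I-AW^{*})^{-1}$, $(I-WB^{*})^{-1}$ and the defect operators in the correct order while applying the intertwining and resolvent relations—so I would organize that computation as a sequence of single substitutions, in the same spirit as the proof that $\Theta'$ is inner.
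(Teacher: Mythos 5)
Your proposal is correct and follows essentially the same route as the paper: reduce each equality, after cancelling the scalar factors $\tfrac{1}{1-\overline{\lambda}z}$ and $\tfrac{1}{z-\lambda}$, to the operator identities $I-\Theta'(z)\Theta'(\lambda)^{*}=D_{W^{*}}(I-\Theta(z)W^{*})^{-1}(I-\Theta(z)\Theta(\lambda)^{*})(I-W\Theta(\lambda)^{*})^{-1}D_{W^{*}}$ and $(I-\Theta(z)W^{*})^{-1}\Theta(z)-(I-\Theta(\lambda)W^{*})^{-1}\Theta(\lambda)=(I-\Theta(z)W^{*})^{-1}(\Theta(z)-\Theta(\lambda))(I-W^{*}\Theta(\lambda))^{-1}$, and verify these with the defect, intertwining, and resolvent identities exactly as the paper does. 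Your telescoping check of the second identity is the same computation the paper carries out in a slightly different order, so there is nothing substantive to add.
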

\begin{proof}
We have
\begin{align*}
&(I-\Theta^{\prime}(z)\Theta^{\prime}(\lambda)^{*})y\\
&\qquad=y-(-W+D_{W^{*}}(I-\Theta(z)W^{*})^{-1}\Theta(z)D_{W})\\
&\qquad\qquad(-W^{*}+D_{W}\Theta(\lambda)^{*}(I-W\Theta(\lambda)^{*})^{-1}D_{W^{*}})y\\
&\qquad=(I-WW^{*})y+WD_{W}\Theta(\lambda)^{*}(I-W\Theta(\lambda)^{*})^{-1}D_{W^{*}}y\\
&\qquad\qquad+D_{W^{*}}(I-\Theta(z)W^{*})^{-1}\Theta(z)D_{W}W^{*}y\\
&\qquad\qquad-D_{W^{*}}(I-\Theta(z)W^{*})^{-1}\Theta(z)D_{W}^{2}\Theta(\lambda)^{*}(I-W\Theta(\lambda)^{*})^{-1}D_{W^{*}}y\\
&\qquad=D_{W^{*}}^{2}y+D_{W^{*}}W\Theta(\lambda)^{*}(I-W\Theta(\lambda)^{*})^{-1}D_{W^{*}}y\\
&\qquad\qquad+D_{W^{*}}(I-\Theta(z)W^{*})^{-1}\Theta(z)W^{*}D_{W^{*}}y\\
&\qquad\qquad-D_{W^{*}}(I-\Theta(z)W^{*})^{-1}\Theta(z)D_{W}^{2}\Theta(\lambda)^{*}(I-W\Theta(\lambda)^{*})^{-1}D_{W^{*}}y\\
&\qquad=D_{W^{*}}[I+W\Theta(\lambda)^{*}(I-W\Theta(\lambda)^{*})^{-1}+(I-\Theta(z)W^{*})^{-1}\Theta(z)W^{*}\\
&\qquad\qquad-(I-\Theta(z)W^{*})^{-1}\Theta(z)D_{W}^{2}\Theta(\lambda)^{*}(I-W\Theta(\lambda)^{*})^{-1}]D_{W^{*}}y\\
&\qquad=D_{W^{*}}(I-\Theta(z)W^{*})^{-1}[(I-\Theta(z)W^{*})\\
&\qquad\qquad+(I-\Theta(z)W^{*})W\Theta(\lambda)^{*}(I-W\Theta(\lambda)^{*})^{-1}\\
&\qquad\qquad+\Theta(z)W^{*}-\Theta(z)D_{W}^{2}\Theta(\lambda)^{*}(I-W\Theta(\lambda)^{*})^{-1}]D_{W^{*}}y\\
&\qquad=D_{W^{*}}(I-\Theta(z)W^{*})^{-1}[I+(I-\Theta(z)W^{*})W\Theta(\lambda)^{*}(I-W\Theta(\lambda)^{*})^{-1}\\
&\qquad\qquad-\Theta(z)D_{W}^{2}\Theta(\lambda)^{*}(I-W\Theta(\lambda)^{*})^{-1}]D_{W^{*}}y\\
&\qquad=D_{W^{*}}(I-\Theta(z)W^{*})^{-1}[I+(I-\Theta(z)W^{*})W\Theta(\lambda)^{*}(I-W\Theta(\lambda)^{*})^{-1}\\
&\qquad\qquad-\Theta(z)(I-WW^{*})\Theta(\lambda)^{*}(I-W\Theta(\lambda)^{*})^{-1}]D_{W^{*}}y\\
\end{align*}

\begin{align*}
&=D_{W^{*}}(I-\Theta(z)W^{*})^{-1}[I+W\Theta(\lambda)^{*}(I-W\Theta(\lambda)^{*})^{-1}\\
&\qquad-\Theta(z)W^{*}W\Theta(\lambda)^{*}(I-W\Theta(\lambda)^{*})^{-1}-\Theta(z)\Theta(\lambda)^{*}(I-W\Theta(\lambda)^{*})^{-1}\\
&\qquad+\Theta(z)W^{*}W\Theta(\lambda)^{*}(I-W\Theta(\lambda)^{*})^{-1}]D_{W^{*}}y\\
&=D_{W^{*}}(I-\Theta(z)W^{*})^{-1}[I-W\Theta(\lambda)^{*}+W\Theta(\lambda)^{*}\\
&\qquad-\Theta(z)\Theta(\lambda)^{*}](I-W\Theta(\lambda)^{*})^{-1}D_{W^{*}}y\\
&=D_{W^{*}}(I-\Theta(z)W^{*})^{-1}[I-\Theta(z)\Theta(\lambda)^{*}](I-W\Theta(\lambda)^{*})^{-1}D_{W^{*}}y\\
&=J_{W}(I-\Theta(z)\Theta(\lambda)^{*})(I-W\Theta(\lambda)^{*})^{-1}D_{W^{*}}y.
\end{align*}
It follows that
$J_{W}(k_{\lambda}^{\Theta}(I-W\Theta(\lambda)^{*})^{-1}D_{W^{*}}y)
=k_{\lambda}^{\Theta^{\prime}}y.$

For the other equality, we have
     \begin{align*}
\widetilde{k_{\lambda}^{\Theta^{\prime}}}y
&=\frac{1}{z-\lambda}(\Theta^{\prime}(z)-\Theta^{\prime}(\lambda))y\\
&=\frac{1}{z-\lambda}[-W+D_{W^{*}}(I-\Theta(z) W^{*})^{-1}\Theta(z) D_{W}+W-D_{W^{*}}(I-\Theta(\lambda) W^{*})^{-1}\Theta(\lambda) D_{W}]y\\
&=\frac{1}{z-\lambda}[D_{W^{*}}(I-\Theta(z) W^{*})^{-1}\Theta(z) D_{W}-D_{W^{*}}(I-\Theta(\lambda) W^{*})^{-1}\Theta(\lambda) D_{W}]y\\
&=\frac{1}{z-\lambda}D_{W^{*}}[(I-\Theta(z) W^{*})^{-1}\Theta(z) -(I-\Theta(\lambda) W^{*})^{-1}\Theta(\lambda)] D_{W}y\\
&=\frac{1}{z-\lambda}D_{W^{*}}[(I-\Theta(z) W^{*})^{-1}\Theta(z)-(I-\Theta(z) W^{*})^{-1}\Theta(\lambda)
+(I-\Theta(z) W^{*})^{-1}\Theta(\lambda)\\
&-(I-\Theta(\lambda) W^{*})^{-1}\Theta(\lambda)] D_{W}y\\
&=\frac{1}{z-\lambda}D_{W^{*}}(I-\Theta(z) W^{*})^{-1}[\Theta(z)-(I-\Theta(z)W^{*})(I-\Theta(\lambda)W^{*})^{-1}\Theta(\lambda)] D_{W}y\\
&=\frac{1}{z-\lambda}D_{W^{*}}(I-\Theta(z) W^{*})^{-1}[\Theta(z)-(I-\Theta(\lambda)W^{*})^{-1}\Theta(\lambda)+\Theta(z)W^{*}(I-\Theta(\lambda)W^{*})^{-1}\Theta(\lambda)] D_{W}y\\
&=\frac{1}{z-\lambda}D_{W^{*}}(I-\Theta(z) W^{*})^{-1}[\Theta(z)-\Theta(\lambda)(I-W^{*}\Theta(\lambda))^{-1}+\Theta(z)W^{*}\Theta(\lambda)(I-W^{*}\Theta(\lambda))^{-1}] D_{W}y\\
&=\frac{1}{z-\lambda}D_{W^{*}}(I-\Theta(z) W^{*})^{-1}[\Theta(z)(I-W^{*}\Theta(\lambda))-\Theta(\lambda)+\Theta(z)W^{*}\Theta(\lambda)](I-W^{*}\Theta(\lambda))^{-1} D_{W}y\\
&=\frac{1}{z-\lambda}D_{W^{*}}(I-\Theta(z) W^{*})^{-1}[\Theta(z)-\Theta(\lambda)](I-W^{*}\Theta(\lambda))^{-1} D_{W}y\\
&=D_{W^{*}}(I-\Theta(z) W^{*})^{-1}(\frac{1}{z-\lambda}(\Theta(z)-\Theta(\lambda)))(I-W^{*}\Theta(\lambda))^{-1} D_{W}y\\
&=J_{W}(\widetilde{k_{\lambda}^{\Theta}}(I-W^{*}\Theta(\lambda))^{-1} D_{W}y.\qedhere
\end{align*}
\end{proof}

\begin{proof}[Proof of Theorem~\ref{GCT}]
First we claim that $J_{W}K_{\Theta}\subset K_{\Theta^{'}}$. To show that $J_{W}f$ belong
to $K_{\Theta^{'}}$ for every $f\in K_{\Theta}$, we must show that $J_{W}f$ is orthogonal to every function of the form $\Theta^{'}g$ where $g\in H^{2}(E)$.
This follows from the following computation.
Note here we use the fact that $ \Theta(e^{it})\Theta^*(e^{it})=\Theta^*(e^{it})\Theta(e^{it})=I $ almost everywhere on $\mathbb{T}$.
\begin{align*}
\langle J_{W}f, \Theta ^{'}g\rangle
&=\langle D_{W^{*}}(I-\Theta(e^{it})W^{*})^{-1}f, \Theta^{'}g\rangle\\
&=\langle f, (I-W\Theta(e^{it})^{*})^{-1}D_{W^{*}}\Theta^{'}g\rangle\\
&=\langle f, (I-W\Theta(e^{it})^{*})^{-1}D_{W^{*}}[-W+D_{W^{*}}(I-\Theta(e^{it}) W^{*})^{-1}\Theta(e^{it}) D_{W}]g\rangle \\
&=\langle f, [-(I-W\Theta^{*})^{-1}D_{W^{*}}W+(I-W\Theta^{*})^{-1}D_{W^{*}}^{2}(I-\Theta W^{*})^{-1}\Theta D_{W}]g\rangle\\
&=\langle f, [-(I-W\Theta^{*})^{-1}WD_{W}+(I-W\Theta^{*})^{-1}D_{W^{*}}^{2}(I-\Theta W^{*})^{-1}\Theta D_{W}]g\rangle\\
&=\langle f, (I-W\Theta^{*})^{-1}[-W+D_{W^{*}}^{2}\Theta(I-W^{*}\Theta )^{-1}] D_{W}g\rangle\\
&=\langle f, (I-W\Theta^{*})^{-1}[-W(I-W^{*}\Theta )(I-W^{*}\Theta )^{-1}+D_{W^{*}}^{2}\Theta(I-W^{*}\Theta )^{-1}] D_{W}g\rangle\\
&=\langle f, (I-W\Theta^{*})^{-1}[-W(I-W^{*}\Theta )+(I-WW^{*})\Theta](I-W^{*}\Theta )^{-1} D_{W}g\rangle\\
&=\langle f, (I-W\Theta^{*})^{-1}[-W+WW^{*}\Theta +\Theta -WW^{*}\Theta](I-W^{*}\Theta )^{-1} D_{W}g\rangle\\
&=\langle f, (I-W\Theta(e^{it})^{*})^{-1}[\Theta(e^{it}) -W](I-W^{*}\Theta(e^{it}) )^{-1} D_{W}g\rangle\\
&=\langle f, (I-W\Theta(e^{it})^{*})^{-1}(I -W\Theta(e^{it})^{*})\Theta(e^{it})(I-W^{*}\Theta(e^{it}) )^{-1} D_{W}g\rangle\\
&=\langle f, \Theta(e^{it})(I-W^{*}\Theta(e^{it}) )^{-1} D_{W}g\rangle\\
&=0,
\end{align*}
because the function $\Theta(e^{it})(I-W^{*}\Theta(e^{it}))^{-1} D_{W}g\in \Theta H^{2}(E)$. Hence it follows that $J_{W}K_{\Theta}\subset K_{\Theta^{'}}$.\par
Now define the operator $J_{W}^{'}: K_{\Theta^{\prime}}\longrightarrow K_{\Theta}$ by
\begin{equation}
J_{W}^{'}g=D_{W^{*}}(I+\Theta^{\prime}W^{*})^{-1}g, \quad \forall g\in K_{
\Theta^{\prime}}.
\end{equation}
First we show that $J_{W}^{'}K_{\Theta^{'}}\subset K_{\Theta}$. For this purpose we will prove that $J_{W}^{'}g$ is orthogonal to $\Theta h$ for any $g\in K_{\Theta^{'}}$ and any $h\in H^{2}(E)$. We have
\begin{align*}
\langle J_{W}^{'}g, \Theta h\rangle
&=\langle D_{W^{*}}(I+\Theta^{\prime}W^{*})^{-1}f, \Theta g\rangle=\langle f, (I+W\Theta^{\prime*})^{-1}D_{W^{*}}\Theta g\rangle\\
&=\langle f, (I+W\Theta^{\prime*})^{-1}D_{W^{*}}[W+D_{W^{*}}(I+\Theta^{\prime}) W^{*})^{-1}\Theta^{\prime} D_{W}]g\rangle \\
&=\langle f, (I+W\Theta^{\prime*})^{-1}[D_{W^{*}}W+D_{W^{*}}^{2}(I+\Theta^{\prime} W^{*})^{-1}\Theta^{\prime} D_{W}]g\rangle\\
&=\langle f, (I+W\Theta^{\prime*})^{-1}[WD_{W}+D_{W^{*}}^{2}(I+\Theta^{\prime} W^{*})^{-1}\Theta^{\prime} D_{W}]g\rangle\\
&=\langle f, (I+W\Theta^{\prime*})^{-1}[W+D_{W^{*}}^{2}\Theta^{\prime}(I+W^{*}\Theta^{\prime} )^{-1}] D_{W}g\rangle\\
&=\langle f, (I+W\Theta^{\prime*})^{-1}[W(I+W^{*}\Theta^{\prime} )+D_{W^{*}}^{2}\Theta^{\prime}](I+W^{*}\Theta^{\prime} )^{-1} D_{W}g\rangle\\
&=\langle f, (I+W\Theta^{\prime*})^{-1}[W(I+W^{*}\Theta^{\prime} )+(I-WW^{*})\Theta^{\prime}](I+W^{*}\Theta^{\prime} )^{-1} D_{W}g\rangle\\
&=\langle f, (I+W\Theta^{\prime*})^{-1}[W+WW^{*}\Theta^{\prime} +\Theta^{\prime} -WW^{*}\Theta^{\prime}](I+W^{*}\Theta^{\prime} )^{-1} D_{W}g\rangle\\
&=\langle f, (I+W\Theta^{\prime*})^{-1}[ \Theta^{\prime}+W](I+W^{*}\Theta^{\prime}) )^{-1} D_{W}g\rangle\\
&=\langle f, (I+W\Theta^{\prime*})^{-1}(I +W\Theta^{\prime*})\Theta^{\prime}(I+W^{*}\Theta^{\prime} )^{-1} D_{W}g\rangle\\
&=\langle f, \Theta^{\prime}(I+W^{*}\Theta^{\prime}) )^{-1} D_{W}g\rangle\\
&=0,
\end{align*}
and so $J_{W}^{'}K_{\Theta^{'}}\subset K_{\Theta}$. \par
Next we prove that $J_{W}^{'}$ is the inverse of $J_{W}$. If $f\in K_{\Theta}$, then
\begin{align*}
J_{W}^{'}J_{W}f
&=D_{W^{*}}(I+\Theta^{\prime}W^{*})^{-1}D_{W^{*}}(I-\Theta W^{*})^{-1}f\\
&=D_{W^{*}}[I+(-W+ D_{W*}(I-\Theta W^{*})^{-1}\Theta D_{W})W^{*}]^{-1}D_{W^{*}}(I-\Theta W^{*})^{-1}f\\
&=D_{W^{*}}[I-WW^{*}+ D_{W^{*}}(I-\Theta W^{*})^{-1}\Theta W^{*}D_{W^{*}}]^{-1}D_{W^{*}}(I-\Theta W^{*})^{-1}f\\
&=D_{W^{*}}[D_{W^{*}}^{2}+ D_{W^{*}}(I-\Theta W^{*})^{-1}\Theta W^{*}D_{W^{*}}]^{-1}D_{W^{*}}(I-\Theta W^{*})^{-1}f\\
&=D_{W^{*}}[D_{W^{*}}^{-2}+ D_{W^{*}}^{-1}W^{*-1}\Theta^{-1}(I-\Theta W^{*}) D_{W^{*}}^{-1}]D_{W^{*}}(I-\Theta W^{*})^{-1}f\\
&=[I+ W^{*-1}\Theta^{-1}(I-\Theta W^{*})](I-\Theta W^{*})^{-1}f\\
&=[I+ (\Theta W^{*})^{-1}(I-\Theta W^{*})](I-\Theta W^{*})^{-1}f\\
&=(I-\Theta W^{*})^{-1}f+ (\Theta W^{*})^{-1}f\\
&=(I-\Theta W^{*})^{-1}f+ (\Theta W^{*})^{-1}f+f-f\\
&=(I-\Theta W^{*})^{-1}f- (I-\Theta W^{*})^{-1}f+f=f.
\end{align*}
For $g\in K_{\Theta^{\prime}}$ we have
\begin{align*}
J_{W}J_{W}^{'}g
&=D_{W^{*}}(I-\Theta W^{*})^{-1}D_{W^{*}}(I+\Theta^{\prime}W^{*})^{-1}g\\
&=D_{W^{*}}(I-\Theta W^{*})^{-1}D_{W^{*}}[I+(-W+ D_{W^{*}}(I-\Theta W^{*})^{-1}\Theta D_{W})W^{*}]^{-1}g\\
&=D_{W^{*}}(I-\Theta W^{*})^{-1}D_{W^{*}}[I-WW^{*}+ D_{W^{*}}(I-\Theta W^{*})^{-1}\Theta D_{W}W^{*}]^{-1}g\\
&=D_{W^{*}}(I-\Theta W^{*})^{-1}D_{W^{*}}[D^{2}_{W^{*}}+ D_{W^{*}}(I-\Theta W^{*})^{-1}\Theta W^{*}D_{W^{*}}]^{-1}g\\
&=D_{W^{*}}(I-\Theta W^{*})^{-1}D_{W^{*}}[D^{-2}_{W^{*}}+ D_{W^{*}}^{-1}W^{*-1}\Theta^{-1} (I-\Theta W^{*})D_{W^{*}}^{-1}]g\\
&=D_{W^{*}}(I-\Theta W^{*})^{-1}[D^{-1}_{W^{*}}+ W^{*-1}\Theta^{-1} (I-\Theta W^{*})D_{W^{*}}^{-1}]g\\
&=D_{W^{*}}(I-\Theta W^{*})^{-1}[I+ W^{*-1}\Theta^{-1} (I-\Theta W^{*})]D_{W^{*}}^{-1}g\\
&=D_{W^{*}}(I-\Theta W^{*})^{-1} (\Theta W^{*})^{-1} D_{W^{*}}^{-1}g\\
&=D_{W^{*}}[(\Theta W^{*})^{-1}+(I-\Theta W^{*})^{-1}] D_{W^{*}}^{-1}g\\
&=D_{W^{*}}[I-I+(\Theta W^{*})^{-1}+(I-\Theta W^{*})^{-1}] D_{W^{*}}^{-1}g\\
&=D_{W^{*}}[I-(I-\Theta W^{*})^{-1}+(I-\Theta W^{*})^{-1}] D_{W^{*}}^{-1}g\\
&=g.
\end{align*}
The above computation shows that $J_{W}^{'}$ is the inverse of $J_{W}$ and $J_{W}K_{\Theta}=K_{\Theta^{'}}$.\par
We now show that $J_{W}$ is a unitary operator. By using Proposition \ref{Eq: for Reproducing} we obtain
\begin{align*}
\langle J_W k^{\Theta}_\lambda x, J_W k^{\Theta}_\mu y\rangle
&=\langle J_W k^{\Theta}_\lambda x,  k^{\Theta^{\prime}}_\mu D_{W^{*}}^{-1}(I-W\Theta^{*}(\mu))y\rangle\\
&=\langle J_{W} k^{\Theta}_\lambda(\mu)x,  D_{W^{*}}^{-1}(I-W\Theta^{*}(\mu))y\rangle\\
&=\langle D_{W^{*}}(I-\Theta(\mu))W^{*})^{-1}k^{\Theta}_\lambda(\mu)x,  D_{W^{*}}^{-1}(I-W\Theta^{*}(\mu))y\rangle\\
&=\langle(I-\Theta (\mu)W^{*})D_{W^{*}}^{-1} D_{W^{*}}(I-\Theta(\mu))W^{*})^{-1}k^{\Theta}_\lambda(\mu)x,  y\rangle\\
&=\langle k^{\Theta}_\lambda(\mu)x,  y\rangle=\langle  k^\Theta_\lambda x,   k^\Theta_\mu y\rangle.
\end{align*}
Therefore 
\[
\langle J_W f, J_W g\rangle= \langle f, g\rangle
\]
for any $ f,g $ in the linear span of $ k^{\Theta}_\lambda x $, $ \lambda\in\mathbb{D} $, $ x\in \mathbb{C^{d}} $.
The required result follows by the density of this last set in $ 
 K_\Theta $.
\end{proof}

\begin{rmk}
The defect spaces of $S_{\Theta^{\prime}}$ in terminology of \cite{NF} are given by
\begin{equation}
\begin{split}
\mathcal{D}^{\prime}_{*}&=\Big\{\frac{1}{z}(\Theta^{\prime}(z)-\Theta^{\prime}(0))x:x\in \mathbb{C}^d \Big\}\\
\mathcal{D}^{\prime}&=\{(I-\Theta^{\prime}(z)\Theta^{\prime}(0)^{*})x:x\in \mathbb{C}^{d}\}.
\end{split}
\end{equation}
\end{rmk}
\begin{corollary}\label{inclusion}
\item[(i)]$ f\in \mathcal{D_{*}}^{\perp}$ if and only if $J_{W}f\in \mathcal{D^{ \prime\perp}_{*}}$.
\item[(ii)] $g\in\mathcal{D^{ \prime\perp}_{*}}$ if and only if $J_{W}^{*}g\in \mathcal{D_{*}}^{\perp}$
\end{corollary}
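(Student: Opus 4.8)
The plan is to show that $J_W$ maps the defect space $\mathcal{D}_{*}$ exactly onto $\mathcal{D}^{\prime}_{*}$, and then to invoke the unitarity of $J_W$ from Theorem~\ref{GCT}: a unitary preserves orthogonality, so it will automatically carry $\mathcal{D}_{*}^{\perp}$ onto $\mathcal{D}_{*}^{\prime\perp}$, which is precisely statement (i), while (ii) follows by passing to the inverse $J_W^{*}=J_W^{-1}$. The first step is to recognize that both defect spaces are ranges of evaluated functions: setting $\lambda=0$ in the definition of $\widetilde{k_{\lambda}^{\Theta}}$ gives $\widetilde{k_{0}^{\Theta}}(z)=\tfrac{1}{z}(\Theta(z)-\Theta(0))$, so that $\mathcal{D}_{*}=\{\widetilde{k_{0}^{\Theta}}x:x\in\mathbb{C}^{d}\}$, and likewise $\mathcal{D}^{\prime}_{*}=\{\widetilde{k_{0}^{\Theta^{\prime}}}y:y\in\mathbb{C}^{d}\}$.

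Next I would specialize the second identity of Proposition~\ref{Eq: for Reproducing} to $\lambda=0$, which yields
\[
J_{W}\big(\widetilde{k_{0}^{\Theta}}\,(I-W^{*}\Theta(0))^{-1}D_{W}\,y\big)=\widetilde{k_{0}^{\Theta^{\prime}}}\,y,\qquad y\in\mathbb{C}^{d}.
\]
The operator $A:=(I-W^{*}\Theta(0))^{-1}D_{W}$ is invertible on $\mathbb{C}^{d}$: since $W$ is a strict contraction, $D_{W}=(I-W^{*}W)^{1/2}$ is invertible, and since $\|W^{*}\Theta(0)\|\le\|W\|\,\|\Theta(0)\|<1$ the factor $I-W^{*}\Theta(0)$ is invertible as well. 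Hence, as $y$ runs through $\mathbb{C}^{d}$, the vector $Ay$ runs through all of $\mathbb{C}^{d}$ too, so the displayed identity shows that $J_{W}$ sends a spanning set of $\mathcal{D}_{*}$ onto a spanning set of $\mathcal{D}^{\prime}_{*}$; that is, $J_{W}\mathcal{D}_{*}=\mathcal{D}^{\prime}_{*}$.

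Finally, because $J_{W}\colon K_{\Theta}\to K_{\Theta^{\prime}}$ is unitary, it preserves inner products and therefore orthogonal complements, so
\[
J_{W}\big(\mathcal{D}_{*}^{\perp}\big)=\big(J_{W}\mathcal{D}_{*}\big)^{\perp}=\mathcal{D}_{*}^{\prime\perp},
\]
the complements being taken in $K_{\Theta}$ and $K_{\Theta^{\prime}}$ respectively; this is exactly (i). For (ii), applying $J_{W}^{*}=J_{W}^{-1}$ to the equality above gives $J_{W}^{*}\big(\mathcal{D}_{*}^{\prime\perp}\big)=\mathcal{D}_{*}^{\perp}$, which is the assertion that $g\in\mathcal{D}_{*}^{\prime\perp}$ if and only if $J_{W}^{*}g\in\mathcal{D}_{*}^{\perp}$. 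The only point requiring genuine care is the invertibility of $A$, which is what secures surjectivity onto the whole of $\mathcal{D}_{*}$ rather than merely an inclusion; every remaining step is a formal consequence of Proposition~\ref{Eq: for Reproducing} together with the unitarity of $J_{W}$.
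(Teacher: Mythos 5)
Your proof is correct and rests on the same ingredients as the paper's: the $\lambda=0$ case of Proposition~\ref{Eq: for Reproducing} together with the unitarity of $J_{W}$. If anything, your version is slightly more complete, since you make explicit the invertibility of $(I-W^{*}\Theta(0))^{-1}D_{W}$ and hence the equality $J_{W}\mathcal{D}_{*}=\mathcal{D}^{\prime}_{*}$, which the paper's two one-line inner-product computations leave implicit.
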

\begin{proof}(i)
By using Proposition \ref{Eq: for Reproducing} we have
\begin{align*}
\langle J_{W}f, \widetilde{k_{0}^{\Theta^{\prime}}}x\rangle
&=\langle f, J_{W}^{*}\widetilde{k_{0}^{\Theta^{\prime}}}x\rangle
=\langle f, \widetilde{k_{0}^{\Theta}} D_{W}y\rangle=0.
\end{align*}
(ii) Let $f\in\mathcal{D^{ \prime\perp}_{*}}$ and $D_{W}y=x$ then by Proposition \ref{Eq: for Reproducing} we obtain
\begin{align*}
\langle J_{W}^{*}g, \widetilde{k_{0}^{\Theta}}x\rangle
&=\langle g, J_{W}\widetilde{k_{0}^{\Theta}}x\rangle=\langle g, \widetilde{k_{0}^{\Theta^{\prime}}}y\rangle=0.
\end{align*}
\end{proof}

\begin{prop}\label{main them}
Let $f\in K_{\Theta}$, we have
$$S_{\Theta^{'}}^{*}J_{W}f=J_{W}S_{\Theta}^{*}f+S_{\Theta^{'}}^{*}J_{W}f(0).$$
\end{prop}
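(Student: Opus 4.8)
The plan is to reduce the statement to an elementary telescoping identity, once one observes two facts already implicit in the setup. First, $J_{W}$ acts simply as left multiplication by the analytic $\LL(\mathbb{C}^d)$-valued function $A(z):=D_{W^{*}}(I-\Theta(z)W^{*})^{-1}$, a formula that makes sense on all of $H^{2}(\mathbb{C}^d)$, not only on $K_{\Theta}$. Second, both $S_{\Theta}^{*}$ and $S_{\Theta^{'}}^{*}$ act as the backward shift $(S^{*}h)(z)=(h(z)-h(0))/z$. With this in mind I read the last term on the right-hand side as follows: $f(0)\in\mathbb{C}^d$ is regarded as a constant function, $J_{W}f(0)$ denotes the function $z\mapsto A(z)f(0)$, and $S_{\Theta^{'}}^{*}J_{W}f(0)$ is the backward shift applied to it.

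First I would put $g=J_{W}f$, so that $g(z)=A(z)f(z)$ and in particular $g(0)=A(0)f(0)$. Since $g\in K_{\Theta^{'}}$ and $S_{\Theta^{'}}^{*}$ is the compression of the backward shift, one has
\[
(S_{\Theta^{'}}^{*}g)(z)=\frac{g(z)-g(0)}{z}=\frac{A(z)f(z)-A(0)f(0)}{z}.
\]
The crucial and only substantive step is to add and subtract $A(z)f(0)$ in the numerator, giving
\[
\frac{A(z)f(z)-A(0)f(0)}{z}=A(z)\,\frac{f(z)-f(0)}{z}+\frac{A(z)-A(0)}{z}\,f(0).
\]
Here the first summand equals $A(z)(S_{\Theta}^{*}f)(z)=(J_{W}S_{\Theta}^{*}f)(z)$, because $S_{\Theta}^{*}f\in K_{\Theta}$ and $J_{W}$ is multiplication by $A$; and the second summand is precisely the backward shift of $z\mapsto A(z)f(0)$, i.e.\ $(S_{\Theta^{'}}^{*}J_{W}f(0))(z)$ in the reading fixed above. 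Collecting the three terms yields the asserted identity.

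I do not expect a genuine analytic obstacle: the entire content is the product-rule splitting displayed above. The only points demanding care are bookkeeping ones, namely checking that $(J_{W}f)(0)=A(0)f(0)$ so that the boundary contributions line up, and fixing the meaning of the correction term so that it is the backward shift of $A(\cdot)f(0)$ rather than of the constant $(J_{W}f)(0)$ (the latter would vanish, and the identity would be false). If one prefers to phrase the correction intrinsically, the push-through identity $W^{*}(I-\Theta(0)W^{*})^{-1}=(I-W^{*}\Theta(0))^{-1}W^{*}$ together with $\widetilde{k_{0}^{\Theta}}(z)=(\Theta(z)-\Theta(0))/z$ lets one rewrite $\frac{A(z)-A(0)}{z}f(0)=J_{W}\big(\widetilde{k_{0}^{\Theta}}(I-W^{*}\Theta(0))^{-1}W^{*}f(0)\big)$, connecting it with the reproducing-kernel transformation of Proposition~\ref{Eq: for Reproducing}; but the direct computation above is shorter and self-contained.
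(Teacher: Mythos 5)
Your proof is correct and follows essentially the same route as the paper: both write $J_{W}$ as multiplication by $A(z)=D_{W^{*}}(I-\Theta(z)W^{*})^{-1}$, apply the backward-shift formula to $A(z)f(z)$, and add and subtract $A(z)f(0)$ to split the difference quotient into $A(z)\frac{f(z)-f(0)}{z}$ plus $\frac{A(z)-A(0)}{z}f(0)$, identifying these with $J_{W}S_{\Theta}^{*}f$ and $S_{\Theta^{'}}^{*}J_{W}f(0)$ respectively. Your remark that the correction term must be read as the backward shift of the function $z\mapsto A(z)f(0)$ (not of the constant $(J_{W}f)(0)$) correctly pins down the same interpretation the paper uses implicitly.
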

\begin{proof}
Let $f\in K_{\Theta}$, then
\begin{align*}
S_{\Theta^{'}}^{*}J_{W}f
&=S_{\Theta^{'}}^{*}[D_{W^{*}}(I-\Theta(z)W^{*})^{-1}f]\\
&=\frac{1}{z}\Big(D_{W^{*}}(I-\Theta(z)W^{*})^{-1}f(z)-D_{W^{*}}(I-\Theta(0)W^{*})^{-1}f(0)\Big)\\
&=D_{W^{*}}\frac{1}{z}\Big((I-\Theta(z)W^{*})^{-1}f(z)-(I-\Theta(0)W^{*})^{-1}f(0)\Big)\\
&=D_{W^{*}}\frac{1}{z}\Big((I-\Theta(z)W^{*})^{-1}f(z)-(I-\Theta(z)W^{*})^{-1}f(0)\\
&+(I-\Theta(z)W^{*})^{-1}f(0)-(I-\Theta(0)W^{*})^{-1}f(0)\Big)\\
&=D_{W^{*}}(I-\Theta(z)W^{*})^{-1}\frac{1}{z}(f(z)-f(0))\\
&+D_{W^{*}}\frac{1}{z}((I-\Theta(z)W^{*})^{-1}f(0)-(I-\Theta(0)W^{*})^{-1}f(0))\\
&=D_{W^{*}}(I-\Theta(z)W^{*})^{-1}\frac{1}{z}(f(z)-f(0))\\
&+\frac{1}{z}(D_{W^{*}}(I-\Theta(z)W^{*})^{-1}f(0)-D_{W^{*}}(I-\Theta(0)W^{*})^{-1}f(0))\\
&=D_{W^{*}}(I-\Theta(z)W^{*})^{-1}S_{\Theta}^{*}f+S_{\Theta^{'}}^{*}(D_{W^{*}}(I-\Theta(z)W^{*})^{-1}f(0))\\
&=J_{W}S_{\Theta}^{*}f+S_{\Theta^{'}}^{*}J_{W}f(0).
\end{align*}
\end{proof}

\begin{lem}\label{commut}
$S_{\Theta^{\prime}}J_{W}f=J_{W}S_{\Theta}f$ for $f\in \mathcal{D}^{\perp}_{*}$.
\end{lem}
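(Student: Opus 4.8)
The plan is to exploit the fact that on the defect subspaces $\mathcal{D}_*^{\perp}$ and $\mathcal{D}_*^{\prime\perp}$ the two model operators act as plain multiplication by the variable $z$, with no projection involved. Indeed, the first branch of the second formula in~\eqref{eq: S Theta} asserts that $(S_{\Theta}f)(z)=zf(z)$ for $f\in\mathcal{D}_*^{\perp}$, and the analogous identity $(S_{\Theta^{\prime}}g)(z)=zg(z)$ holds for $g\in\mathcal{D}_*^{\prime\perp}$. Thus, once we know that $J_{W}$ carries $\mathcal{D}_*^{\perp}$ into $\mathcal{D}_*^{\prime\perp}$, the commutation reduces to a statement about multiplication operators, where it becomes transparent.

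First I would fix $f\in\mathcal{D}_*^{\perp}$ and invoke Corollary~\ref{inclusion}(i) to conclude $J_{W}f\in\mathcal{D}_*^{\prime\perp}$. By the remarks above, this yields $(S_{\Theta}f)(z)=zf(z)$ and $(S_{\Theta^{\prime}}J_{W}f)(z)=z(J_{W}f)(z)$ simultaneously, both holding genuinely in the respective model spaces (that is, the relevant products already lie in $K_{\Theta}$, resp. $K_{\Theta^{\prime}}$, so no compression is lost).

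The final step is to observe that $J_{W}$ is nothing but multiplication by the $\LL(\mathbb{C}^{d})$-valued analytic function $D_{W^{*}}(I-\Theta(z)W^{*})^{-1}$, and that multiplication by the scalar $z$ commutes with multiplication by any such matrix-valued symbol. Concretely,
\[
S_{\Theta^{\prime}}J_{W}f=z\,D_{W^{*}}(I-\Theta(z)W^{*})^{-1}f(z)=D_{W^{*}}(I-\Theta(z)W^{*})^{-1}\bigl(zf(z)\bigr)=J_{W}(S_{\Theta}f),
\]
which is the desired identity. The only point requiring care, and hence the main (admittedly mild) obstacle, is the transfer of membership in the orthogonal complements of the defect spaces, so that the simple multiplicative description of $S_{\Theta^{\prime}}$ is legitimately available for $J_{W}f$; this is supplied precisely by Corollary~\ref{inclusion}(i). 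Without it one would instead have to carry the projection $P_{\Theta^{\prime}}$ through the computation, as in the general identity of Proposition~\ref{main them}, and the extra boundary term $S_{\Theta^{\prime}}^{*}J_{W}f(0)$ appearing there is exactly what the hypothesis $f\in\mathcal{D}_*^{\perp}$ is designed to eliminate.
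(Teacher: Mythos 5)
Your proof is correct, but it follows a genuinely different route from the paper's. The paper first specializes Proposition~\ref{main them} to $f\in\mathcal{D}^{\perp}$ (where $f(0)=0$) to obtain the adjoint relation $S_{\Theta'}^{*}J_{W}=J_{W}S_{\Theta}^{*}$ on $\mathcal{D}^{\perp}$, then assembles a commutative square of unitary operators between $\mathcal{D}^{\perp},\mathcal{D}_{*}^{\perp}$ and their primed counterparts, and finally passes to adjoints and conjugates by $J_{W}$ to recover the statement about $S_{\Theta'}$ itself. You instead argue directly on the forward shift: for $f\in\mathcal{D}_{*}^{\perp}$ the formula~\eqref{eq: S Theta} gives $(S_{\Theta}f)(z)=zf(z)$ with no compression, Corollary~\ref{inclusion}(i) transports membership to $J_{W}f\in\mathcal{D}_{*}^{\prime\perp}$ so that the same multiplicative description applies to $S_{\Theta'}$, and then the identity reduces to the trivial commutation of the scalar factor $z$ with the matrix-valued multiplier $D_{W^{*}}(I-\Theta(z)W^{*})^{-1}$ defining $J_{W}$. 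Each step you rely on is available in the paper at that point, so the argument is complete. What your approach buys is brevity and transparency --- it avoids the detour through $S^{*}$, the diagram, and the adjoint manipulations entirely; what the paper's approach buys is that it exhibits the full commuting square on both pairs of defect complements (and the unitarity of each restricted map), which is slightly more information than the lemma itself asserts, and it reuses Proposition~\ref{main them}, which is needed elsewhere anyway.
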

\begin{proof}
Let $ f\in \mathcal{D}^{\perp} $; so $ f\perp k^\Theta_0 x $ for any $ x\in\mathbb{C^{d}} $, which by the reproducing kernel property of $ k^\Theta_0 $ is equivalent to $f(0)=0 $. So from Proposition~\ref{main them} it follows that
\begin{equation}\label{eq:commutation on D perp}
S_{\Theta^{'}}^{*}J_{W}f=J_{W}S_{\Theta}^{*}f \text{ for } f\in \mathcal{D}^\perp.
\end{equation}
Now by (\ref{eq: S Theta}), it follows that $ S_\Theta^* $ is a unitary (division by $ z $) from $ \mathcal{D}^\perp $ to $ \mathcal{D}_*^\perp $ (and similarly for $ \Theta' $). On the other hand, from Proposition~\ref{Eq: for Reproducing} it follows that $ J_W $ maps (unitarily) $ \mathcal{D}^\perp $ to $ \mathcal{D}'^\perp $, and $ \mathcal{D}_*^\perp $ to $ \mathcal{D}_*'{}^\perp $. Using~\eqref{eq:commutation on D perp}, we have the following commutative diagram of unitary operators:
\[
\begin{tikzcd}
\mathcal{D}^\perp \arrow[r, "S_\Theta^*"]\arrow[d, "J_W"]& \mathcal{D}_*^\perp\arrow[d, "J_W"] \\
\mathcal{D}'{}^\perp\arrow[r, "S_{\Theta'}^*" ]& \mathcal{D}'_*{}^\perp.
\end{tikzcd}
\]
From the operators in above diagram as acting between these spaces, we have \[S_{\Theta^{'}}^{*}J_{W}=J_{W}S_{\Theta}^{*}  ;\]
by passing to the adjoint
we get
\[
J_{W}^*S_{\Theta^{'}}=S_{\Theta}J_{W}^*,
\]
where the two sides act from $\mathcal{D}'_*{}^\perp  $ to $\mathcal{D}^\perp$, and then multiplying on the left and on the right with $ J_W $,
\[
S_{\Theta^{'}}J_W=J_W S_{\Theta},
\]
where the two sides act from $ \mathcal{D}_*^\perp $ to $ \mathcal{D}'{}^\perp $. This completes the proof.
\end{proof}

A characterization of matrix valued truncated Toeplitz operators is obtained (see Theorem 5.5 in \cite{RT}) by shift invariance. A bounded operator
$A$ on $K_{\Theta}$ is called shift invariant if
$$f, Sf\in K_{\Theta} \quad implies \quad Q_{A}(Sf)=Q_{A}(f),$$
where $Q_{A}$ is associated quadratic form on $K_{\Theta}$ defined by $Q_{A}(f)=\langle Af, f\rangle$. It is well known that $S_{\Theta}f\in K_{\Theta}$
if and only if $f\in \mathcal{D}_{*}^{\perp}$.
\begin{thm}\cite{RT}\label{shift invariant thm}
A bounded operator $A$ on $K_{\Theta}$ is a matrix valued truncated Toeplitz operator  if and only if $A$ is shift invariant.
\end{thm}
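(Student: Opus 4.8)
The plan is to prove the two implications separately. The implication ``truncated Toeplitz $\Rightarrow$ shift invariant'' is a short computation, while the converse is the substantial part and follows the scalar scheme of Sarason, adapted to the vector-valued defect structure recorded in~\eqref{eq: S Theta}.

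First I would dispose of the easy direction. Suppose $A=A_\varphi^\Theta:=P_\Theta M_\varphi|_{K_\Theta}$ is the compression of multiplication by a symbol $\varphi$ taking values in $\LL(\mathbb{C}^d)$, and let $f\in K_\Theta$ satisfy $Sf\in K_\Theta$. As recorded just before the statement, this forces $f\in\mathcal{D}_*^\perp$ and $Sf=S_\Theta f=zf$. Then
\[
Q_A(S_\Theta f)=\langle P_\Theta(\varphi\, zf),\,zf\rangle=\langle \varphi\, zf,\,zf\rangle=\langle \varphi f,\,f\rangle=\langle P_\Theta(\varphi f),\,f\rangle=Q_A(f),
\]
where the projection is dropped because $zf,f\in K_\Theta$, and the central equality uses that $|z|=1$ a.e.\ on $\mathbb{T}$, so that multiplication by $z$ is unitary on $L^2(\mathbb{C}^d)$. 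Hence $A$ is shift invariant.

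For the converse, let $A$ be bounded and shift invariant. I would first polarize the identity $Q_A(S_\Theta f)=Q_A(f)$, valid for $f\in\mathcal{D}_*^\perp$, to obtain the sesquilinear relation $\langle AS_\Theta f,S_\Theta g\rangle=\langle Af,g\rangle$ for all $f,g\in\mathcal{D}_*^\perp$; equivalently, setting $C:=S_\Theta^{*}AS_\Theta-A$, this reads $P_{\mathcal{D}_*^\perp}\,C\,P_{\mathcal{D}_*^\perp}=0$. Since $\dim\mathcal{D}_*\le d$, this compression identity forces $C$ to be a finite-rank operator (of rank at most $2d$) whose nonzero part is carried by the defect space $\mathcal{D}_*$; using the action of $S_\Theta$ on $\mathcal{D}_*$ recorded in~\eqref{eq: S Theta} one writes $C$ explicitly in terms of the kernel $\widetilde{k_0^\Theta}$, which spans $\mathcal{D}_*$. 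Because $\Theta$ is pure inner, the model operator $S_\Theta$ is of class $C_{00}$, so $S_\Theta^n\to0$ and $S_\Theta^{*n}\to0$ strongly; telescoping the relation $A=S_\Theta^{*}AS_\Theta-C$ therefore yields
\[
A=-\sum_{k\ge0}S_\Theta^{*k}\,C\,S_\Theta^{k},
\]
the series converging strongly. Substituting the explicit form of $C$ and evaluating the iterates $S_\Theta^{k}\widetilde{k_0^\Theta}$ produces the Fourier coefficients of a symbol $\varphi$, and one verifies $A=A_\varphi^\Theta$ by testing both sides against the reproducing kernels $k_\lambda^\Theta x$, which are dense in $K_\Theta$.

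The step I expect to be the main obstacle is the converse reconstruction: pinning down the exact finite-rank form of $C$ from the compression identity, and then verifying that the telescoped series is genuinely a truncated Toeplitz operator, i.e.\ that the coefficients read off above assemble into a bona fide matrix symbol $\varphi$ whose compression reproduces $A$. In the matrix-valued setting the rank-one ingredients of the scalar argument are replaced by operators of the form $h\mapsto\langle h,\widetilde{k_0^\Theta}v\rangle\,w$ with $v,w\in\mathbb{C}^d$, and one must track the $\LL(\mathbb{C}^d)$-valued coefficients and their noncommutativity carefully throughout.
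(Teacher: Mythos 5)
First, a point of comparison: the paper itself gives no proof of this statement --- it is quoted verbatim from \cite{RT} (Theorem 5.5 there) --- so the relevant benchmark is the argument in \cite{RT}, which proceeds through a Sarason-type finite-rank identity for the difference $A-S_\Theta AS_\Theta^*$. Your forward direction is correct and essentially complete (modulo the routine remark that one first works on the dense domain where $\varphi f\in L^2(\mathbb{C}^d)$ and then extends by boundedness), and your setup for the converse is sound: polarizing $Q_A(S_\Theta f)=Q_A(f)$ over the complex subspace $\mathcal{D}_*^\perp$ does give $P_{\mathcal{D}_*^\perp}\,C\,P_{\mathcal{D}_*^\perp}=0$ for $C=S_\Theta^*AS_\Theta-A$, so $C$ has rank at most $2d$ and is carried by $\mathcal{D}_*$, and the telescoped series $A=-\sum_{k\ge0}S_\Theta^{*k}CS_\Theta^{k}$ converges strongly because $S_\Theta$ is of class $C_{00}$ (note this uses that $\Theta$ is two-sided inner, i.e.\ unitary-valued a.e., not its purity).

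The genuine gap is exactly the step you flag as the main obstacle, and it is not a technicality: it is the entire content of the hard direction. Knowing that $C=\sum_j a_j\otimes \widetilde{k_0^{\Theta}}e_j+\sum_j \widetilde{k_0^{\Theta}}e_j\otimes b_j$ for some $a_j,b_j\in K_\Theta$ and a basis $(e_j)$ of $\mathbb{C}^d$, and that $A=-\sum_k S_\Theta^{*k}CS_\Theta^{k}$, you still must produce a symbol $\varphi\in L^2(\LL(\mathbb{C}^d))$ and verify $A=P_\Theta M_\varphi|_{K_\Theta}$. The iterates $S_\Theta^{*k}\widetilde{k_0^{\Theta}}x=S^{*(k+1)}\Theta x$ are not reproducing kernels, and the sums $\sum_k (S_\Theta^{*k}a_j)\otimes(S^{*(k+1)}\Theta e_j)$ do not visibly assemble into a compressed multiplication operator; matching them against $A_\varphi^\Theta$ (say on the kernels $k_\lambda^\Theta x$ and $\widetilde{k_\lambda^{\Theta}}x$) is where all the work lies, and where the noncommutativity of the $\LL(\mathbb{C}^d)$-valued coefficients actually bites. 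As written, your argument establishes only that shift invariance forces the finite-rank identity for $S_\Theta^*AS_\Theta-A$; the implication from that identity to membership in $\mathcal{T}_\Theta$ --- precisely what Theorem 5.5 of \cite{RT} supplies --- is asserted rather than proved. To close the gap, either carry out the symbol reconstruction explicitly or reduce to the matrix analogue of Sarason's rank-two characterization as proved in \cite{RT}.
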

 
The spaces of matrix valued truncated Toeplitz operators on $K_{\Theta}$ and $K_{\Theta^{\prime}}$ are denoted respectively by $\mathcal{T}_{\Theta}$
and $\mathcal{T}_{\Theta^{\prime}}$. The next result shows the action of the generalized Crofoot transform.
 
\begin{thm}\label{th:main theorem}
$\mathcal{T}_{\Theta}=J_{W}^{*}\mathcal{T}_{\Theta^{\prime}}J_{W}.$
\end{thm}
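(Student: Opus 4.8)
The plan is to reduce the whole statement to the shift-invariance characterization of Theorem~\ref{shift invariant thm} and then transport the defining identity across $J_W$, using the commutation Lemma~\ref{commut}, the defect-space correspondence of Corollary~\ref{inclusion}, and the unitarity of $J_W$ from Theorem~\ref{GCT}. Since $J_W\colon K_\Theta\to K_{\Theta'}$ is unitary, the asserted equality $\mathcal{T}_\Theta=J_W^*\mathcal{T}_{\Theta'}J_W$ is equivalent to the two inclusions $J_W\mathcal{T}_\Theta J_W^*\subseteq\mathcal{T}_{\Theta'}$ and $J_W^*\mathcal{T}_{\Theta'}J_W\subseteq\mathcal{T}_\Theta$. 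I would prove the first; the second then follows by the same argument with the roles of $\Theta$ and $\Theta'$ interchanged, recalling that $\Theta$ is recovered from $\Theta'$ by Remark~\ref{eq: formula for theta} and that $J_W^*$ plays for $\Theta'$ the role $J_W$ plays for $\Theta$.

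So I would fix $A\in\mathcal{T}_\Theta$, set $B=J_WAJ_W^*$ (a bounded operator on $K_{\Theta'}$), and show that $B$ is shift invariant. By Theorem~\ref{shift invariant thm} it suffices to verify $\langle BS_{\Theta'}g,S_{\Theta'}g\rangle=\langle Bg,g\rangle$ for every $g\in K_{\Theta'}$ with $S_{\Theta'}g\in K_{\Theta'}$, and, as recalled just before that theorem, this last condition means precisely $g\in\mathcal{D}'_*{}^\perp$. Given such a $g$, I would put $f=J_W^*g$; Corollary~\ref{inclusion}(ii) gives $f\in\mathcal{D}_*^\perp$, so $S_\Theta f\in K_\Theta$, and Lemma~\ref{commut} yields $S_{\Theta'}g=S_{\Theta'}J_Wf=J_WS_\Theta f$, hence $J_W^*S_{\Theta'}g=S_\Theta f$.

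With these identities and the fact that $J_W$ preserves inner products, the computation is immediate:
\begin{align*}
\langle BS_{\Theta'}g,S_{\Theta'}g\rangle
&=\langle AJ_W^*S_{\Theta'}g,J_W^*S_{\Theta'}g\rangle=\langle AS_\Theta f,S_\Theta f\rangle,\\
\langle Bg,g\rangle
&=\langle AJ_W^*g,J_W^*g\rangle=\langle Af,f\rangle.
\end{align*}
Since $f\in\mathcal{D}_*^\perp$ and $A$ is shift invariant, the two right-hand sides agree, so $B$ is shift invariant and thus $B\in\mathcal{T}_{\Theta'}$. This gives $J_W\mathcal{T}_\Theta J_W^*\subseteq\mathcal{T}_{\Theta'}$.

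The reverse inclusion is obtained by the same reasoning after swapping $\Theta\leftrightarrow\Theta'$ and $J_W\leftrightarrow J_W^*$, invoking Corollary~\ref{inclusion}(i) in place of (ii) and reading Lemma~\ref{commut} in its adjoint form $S_\Theta J_W^*h=J_W^*S_{\Theta'}h$ for $h\in\mathcal{D}'_*{}^\perp$. I expect the only delicate point — and the main conceptual obstacle — to be that shift invariance is tested \emph{only} on the subspace where the shift remains inside the model space, namely $\mathcal{D}_*^\perp$ (respectively $\mathcal{D}'_*{}^\perp$); it is exactly on these subspaces that Lemma~\ref{commut} supplies the intertwining $S_{\Theta'}J_W=J_WS_\Theta$, and Corollary~\ref{inclusion} guarantees that $J_W$ and $J_W^*$ carry the relevant test vectors between the two defect complements. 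Combining the two inclusions yields $\mathcal{T}_\Theta=J_W^*\mathcal{T}_{\Theta'}J_W$.
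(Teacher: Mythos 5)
Your proposal is correct and follows essentially the same route as the paper: both reduce the statement to the shift-invariance criterion of Theorem~\ref{shift invariant thm}, transport test vectors between $\mathcal{D}_*^\perp$ and $\mathcal{D}'_*{}^\perp$ via Corollary~\ref{inclusion}, and apply the intertwining of Lemma~\ref{commut} together with the unitarity of $J_W$. The only cosmetic difference is that the paper writes out both inclusions explicitly while you derive the second by symmetry; the substance is identical.
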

\begin{proof}
 Let $A\in \mathcal{T}_{\Theta^{\prime}}$, then $J_{W}^{*}AJ_{W}\in
J_{W}^{*}\mathcal{T}_{\Theta^{\prime}}J_{W}$. We shall show that $J_{W}^{*}AJ_{W}\in \mathcal{T}_{\Theta}$.
Assume that $f\in \mathcal{D}_{*}^{\perp}$ then
by Corollary \ref{inclusion} we have
$J_{W}f\in \mathcal{D^{\prime \perp}_{*}}$. By Lemma \ref{commut} we obtain
\begin{align*}
Q_{J_{W}^{*}AJ_{W}}(f)
&=\langle J_{W}^{*}AJ_{W}f, f\rangle=\langle AJ_{W}f, J_{W}f\rangle\\
&=\langle A S_{\Theta^{\prime}}J_{W}f, S_{\Theta^{\prime}}J_{W}f\rangle=\langle AJ_{W}S_{\Theta}f, J_{W}S_{\Theta}f\rangle\\
&=\langle J_{W}^{*}AJ_{W}S_{\Theta}f, S_{\Theta}f\rangle=Q_{J_{W}^{*}AJ_{W}}(S_{\Theta}f).
\end{align*}
It shows that $J_{W}^{*}AJ_{W}\in \mathcal{T}_{\Theta}$. Therefore by Theorem \ref{shift invariant thm} we obtain $J_{W}^{*}\mathcal{T}_{\Theta^{\prime}}J_{W}\subset \mathcal{T}_{\Theta}$.\par
To prove the required equality we now prove the inclusion $J_{W}\mathcal{T}_{\Theta}J_{W}^{-1}\subset\mathcal{T}_{\Theta^{\prime}}$.\par
Assume that $B\in \mathcal{T}_{\Theta}$ then we have $J_{W}B J_{W}^{*}\in J_{W}\mathcal{T}_{\Theta}J_{W}^{*}$. Let $f\in \mathcal{D}^{\prime \perp}_{*}$
then by Corollary \ref{inclusion} we get $J_{W}^{*}f\in \mathcal{D}^{\perp}_{*}$ and again by Lemma \ref{commut} we have
\begin{align*}
Q_{J_{W}B J_{W}^{*}}(f)
&=\langle J_{W}B J_{W}^{*}f, f\rangle=\langle B J_{W}^{*}f, J_{W}^{*}f\rangle\\
&=\langle BS_{\Theta} J_{W}^{*}f, S_{\Theta}J_{W}^{*}f\rangle=\langle B J_{W}^{*}S_{\Theta^{\prime}}f, J_{W}^{*}S_{\Theta^{\prime}}f\rangle\\
&=\langle J_{W} B J_{W}^{*}S_{\Theta^{\prime}}f, S_{\Theta^{\prime}}f\rangle=Q_{J_{W}B J_{W}^{*}}(S_{\Theta^{\prime}}f).
\end{align*}
Hence $J_{W}B J_{W}^{*}$ is shift invariant. Again by Theorem \ref{shift invariant thm} we have $J_{W}\mathcal{T}_{\Theta}J_{W}^{*}\subset\mathcal{T}_{\Theta^{\prime}}$ which implies that
$\mathcal{T}_{\Theta}\subset J_{W}^{*}\mathcal{T}_{\Theta^{\prime}}J_{W}$.   The required result follows.
\end{proof}
\section{Conjugation and Crofoot transform}
A bounded linear operator $T$ on a separable Hilbert space $E$ is complex symmetric if there exist an orthonormal basis for $E$ with respect to which $T$ has self-transpose matrix representation. An equivalent definition also exist and involve conjugation.
A \emph{conjugation} on a Hilbert space $E$ is a conjugate-linear, isometric and involutive map. We say that $T$ is $C$-symmetric if $T=CT^{*}C$, and complex symmetric if there exist a conjugation $C$ with respect to which $T$ is $C$-symmetric (see~\cite{GP}).\par
Let $\Gamma$ be a conjugation on $E$ and $\Theta$ is $\Gamma-$symmetric a.e on $\mathbb{T}$. Then the map $C_{\Gamma}:L^{2}(E)\longrightarrow L^{2}(E)$ defined by
$$C_{\Gamma}f=\Theta e^{-it}\Gamma f,$$ is conjugation on $L^{2}(E)$.
The following  lemma shows the relation, in this case, between conjugation and model spaces.
\begin{lem}\cite{RT}
Suppose that $\Gamma\Theta\Gamma=\Theta^{*}$ a.e on $\mathbb{T}$. Then $C_{\Gamma}K_{\Theta}=K_{\Theta}$.
\end{lem}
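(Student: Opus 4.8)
The plan is to exploit that $C_\Gamma$ is an antiunitary involution on $L^2(E)$ (a conjugation, as noted just above the statement) together with the orthogonal decomposition
\[
L^2(E) = N \oplus K_\Theta \oplus \Theta H^2(E), \qquad N := H^2(E)^\perp,
\]
where $N$ is the subspace of $L^2(E)$ consisting of functions whose Fourier coefficients vanish in all nonnegative degrees. Since a conjugation carries the orthogonal complement of a subspace to the orthogonal complement of its image, it suffices to prove that $C_\Gamma$ interchanges the two outer summands $\Theta H^2(E)$ and $N$. Once that is known, $C_\Gamma$ maps $\Theta H^2(E)\oplus N$ onto itself and hence fixes its orthogonal complement $K_\Theta$, which is exactly the assertion $C_\Gamma K_\Theta = K_\Theta$.

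The single place the hypothesis enters is the following rewriting. From $\Gamma\Theta\Gamma = \Theta^*$ a.e. and $\Gamma^2 = I$ one gets $\Gamma\,\Theta(e^{it}) = \Theta(e^{it})^*\,\Gamma$ as maps on $E$ for a.e.\ $t$, so that $\Gamma(\Theta h) = \Theta^*(\Gamma h)$ for every $E$-valued $h$. Combining this with $\Theta\Theta^* = I$ a.e., I compute, for $h\in H^2(E)$,
\[
C_\Gamma(\Theta h) = \Theta\, e^{-it}\,\Gamma(\Theta h) = \Theta\, e^{-it}\,\Theta^*(\Gamma h) = e^{-it}\,\Gamma h,
\]
the factor $\Theta$ having cancelled against $\Theta^*$.

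The remaining step is Fourier bookkeeping, keeping in mind that $\Gamma$ is conjugate-linear and acts pointwise, so it sends a Fourier term $a_n e^{int}$ to $(\Gamma a_n)e^{-int}$; thus $\Gamma$ carries $H^2(E)$ (modes $\ge 0$) into $\{\text{modes}\le 0\}$ and carries $N$ (modes $\le -1$) into $\{\text{modes}\ge 1\}$. Hence for $h\in H^2(E)$ the function $e^{-it}\Gamma h$ has only modes $\le -1$, giving $C_\Gamma(\Theta H^2(E))\subseteq N$; and for $g\in N$ the function $e^{-it}\Gamma g$ has only modes $\ge 0$, so $C_\Gamma g = \Theta\,e^{-it}\Gamma g\in \Theta H^2(E)$, giving $C_\Gamma(N)\subseteq \Theta H^2(E)$. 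Applying the involution $C_\Gamma$ once more turns both inclusions into equalities, so $C_\Gamma$ genuinely interchanges $\Theta H^2(E)$ and $N$, completing the argument.

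I expect the main, and essentially only, obstacle to be getting this bookkeeping exactly right: tracking how the conjugate-linearity of $\Gamma$ reverses the sign of the Fourier indices, and checking that the twist by $e^{-it}$ shifts each outer summand precisely onto the other. The algebraic heart is the cancellation displayed above, where the hypothesis $\Gamma\Theta\Gamma = \Theta^*$ is exactly what is needed to strip the $\Theta$ from $C_\Gamma(\Theta h)$; without it the two outer summands would not align. A fully equivalent alternative, should the subspace-swapping formulation feel indirect, is to use the characterization $K_\Theta = \{f\in H^2(E):\Theta^* f\in N\}$ and verify directly that $C_\Gamma f\in H^2(E)$ and $\Theta^* C_\Gamma f\in N$ for $f\in K_\Theta$; the same two computations above supply both facts.
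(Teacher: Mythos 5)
Your proof is correct. Note that the paper does not prove this lemma itself but cites it from \cite{RT}; your argument --- using that $C_{\Gamma}$ is a conjugation on $L^{2}(E)$ which interchanges the two outer summands in $L^{2}(E)=H^{2}(E)^{\perp}\oplus K_{\Theta}\oplus\Theta H^{2}(E)$ (via the cancellation $C_{\Gamma}(\Theta h)=e^{-it}\Gamma h$ coming from $\Gamma\Theta\Gamma=\Theta^{*}$, plus the Fourier-index reversal caused by conjugate-linearity) and therefore fixes the middle summand --- is exactly the standard argument for this statement, and all the steps (including the passage from the inclusions to equalities via involutivity) check out.
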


Note that in the scalar case the inner function $ \theta $ is always C-symmetric with respect to usual complex conjugation, which produces the standard conjugation on the model space $ K_\theta $.

Suppose that $\Gamma W^{*}=W\Gamma$  and $\Gamma\Theta\Gamma=\Theta^{*}$, then a simple calculation
shows that $\Gamma\Theta^{'}\Gamma=\Theta^{*'}$, and the relation $\Gamma D_{W^{*}}=D_{W}\Gamma$ also holds.

\begin{lem}
Suppose $C_{\Gamma}$ is conjugation on $K_{\Theta}$ and $C_{\Gamma}^{'}$ is conjugation on $K_{\Theta^{'}}$.
Then generalized Crofoot transformation intertwines the conjugation on $K_{\Theta}$ with the conjugation on $K_{\Theta^{'}}$,
that is $J_{W}C_{\Gamma}=C_{\Gamma}^{'}J_{W}$.
\end{lem}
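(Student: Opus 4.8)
The plan is to verify the intertwining relation $J_WC_\Gamma=C_\Gamma'J_W$ by a direct computation on boundary values, since the conjugations are defined through $C_\Gamma f=\Theta e^{-it}\Gamma f$ and $C_\Gamma'g=\Theta' e^{-it}\Gamma g$, and the only place the unitarity $\Theta\Theta^*=\Theta^*\Theta=I$ is available is a.e.\ on $\mathbb{T}$. Abbreviating $B=D_{W^*}(I-\Theta W^*)^{-1}$, so that $J_Wf=Bf$, I would first expand the left-hand side, noting that the scalar $e^{-it}$ commutes with every operator-valued factor:
\[
J_WC_\Gamma f=B\,\bigl(\Theta e^{-it}\Gamma f\bigr)=e^{-it}\,D_{W^*}(I-\Theta W^*)^{-1}\Theta\,\Gamma f .
\]

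The key conceptual step is to transport the conjugation $\Gamma$ across $B$ on the right-hand side. Because $\Gamma$ is conjugate-linear and acts pointwise on values, for any $A\in\LL(\mathbb{C}^d)$ one has $\Gamma(Af)=(\Gamma A\Gamma)\,\Gamma f$ and $\Gamma A^{-1}\Gamma=(\Gamma A\Gamma)^{-1}$. Using the hypotheses $\Gamma W^*=W\Gamma$ (whence $\Gamma W^*\Gamma=W$), $\Gamma D_{W^*}=D_W\Gamma$, and $\Gamma\Theta\Gamma=\Theta^*$ a.e.\ on $\mathbb{T}$, I compute $\Gamma(I-\Theta W^*)\Gamma=I-\Theta^*W$ and therefore $\Gamma B\Gamma=D_W(I-\Theta^*W)^{-1}$. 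Consequently
\[
C_\Gamma'J_Wf=\Theta' e^{-it}\,\Gamma(Bf)=e^{-it}\,\Theta' D_W(I-\Theta^*W)^{-1}\,\Gamma f .
\]

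Comparing the two displays and cancelling the common factor $e^{-it}\Gamma f$, the lemma reduces to the operator identity
\[
D_{W^*}(I-\Theta W^*)^{-1}\Theta=\Theta' D_W(I-\Theta^*W)^{-1}\qquad\text{a.e.\ on }\mathbb{T},
\]
which I expect to be the main obstacle and the computational core. I would prove it by multiplying on the right by $(I-\Theta^*W)$, which is invertible since $W$ is a strict contraction. On the left, $\Theta(I-\Theta^*W)=\Theta-W$ by $\Theta\Theta^*=I$; on the right, substituting $\Theta'=-W+D_{W^*}(I-\Theta W^*)^{-1}\Theta D_W$ together with $D_W^2=I-W^*W$ and collecting the terms carrying the factor $D_{W^*}(I-\Theta W^*)^{-1}$ leaves, after using $-(I-\Theta W^*)W=-W+\Theta W^*W$, exactly the requirement $D_{W^*}W=WD_W$. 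This last relation is the standard commutation valid for any contraction (from $W(I-W^*W)=(I-WW^*)W$ and the functional calculus for the square root), so the identity holds. Since both $J_WC_\Gamma$ and $C_\Gamma'J_W$ are conjugate-linear, checking their equality on every $f$ as above completes the proof.
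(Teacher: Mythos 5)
Your proposal is correct and follows essentially the same route as the paper's proof: both arguments come down to expanding $\Theta'$ via \eqref{eq:formula for Theta'} and transporting $\Gamma$ across the operator coefficients using $\Gamma W^{*}=W\Gamma$, $\Gamma D_{W^{*}}=D_{W}\Gamma$, $\Gamma\Theta\Gamma=\Theta^{*}$, together with $WD_{W}=D_{W^{*}}W$ and $D_{W}^{2}=I-W^{*}W$, ending at the same telescoped expression $D_{W^{*}}(I-\Theta W^{*})^{-1}(\Theta-W)$. The only difference is organizational: you push $\Gamma$ all the way to the right first and isolate the $\Gamma$-free identity $D_{W^{*}}(I-\Theta W^{*})^{-1}\Theta=\Theta' D_{W}(I-\Theta^{*}W)^{-1}$ as the computational core, whereas the paper carries $\Gamma$ along through the middle of the chain of equalities; both verifications are sound.
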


\begin{proof}
Let $f\in K_{\Theta}$, then we have
\begin{align*}
C_{\Gamma}^{'}J_{W}f
&=\Theta^{'}e^{-it}\Gamma(D_{W^{*}}(I-\Theta W^{*})^{-1}f)\\
&=e^{-it}[-W+D_{W^{*}}(I-\Theta W^{*})^{-1}\Theta D_{W}]\Gamma(D_{W^{*}}(I-\Theta W^{*})^{-1}f)\\
&=e^{-it}[-W\Gamma D_{W^{*}}(I-\Theta W^{*})^{-1}f+D_{W^{*}}(I-\Theta W^{*})^{-1}\Theta D_{W}\Gamma D_{W^{*}}(I-\Theta W^{*})^{-1}f]\\
&=e^{-it}[-W D_{W}\Gamma(I-\Theta W^{*})^{-1}f+D_{W^{*}}(I-\Theta W^{*})^{-1}\Theta D_{W}^{2}\Gamma(I-\Theta W^{*})^{-1}f]\\
&=e^{-it}[-D_{W^{*}}W\Gamma(I-\Theta W^{*})^{-1}f+D_{W^{*}}(I-\Theta W^{*})^{-1}\Theta D_{W}^{2}\Gamma(I-\Theta W^{*})^{-1}f]\\
&=e^{-it}D_{W^{*}}[-W+(I-\Theta W^{*})^{-1}\Theta D_{W}^{2}]\Gamma(I-\Theta W^{*})^{-1}f\\
&=e^{-it}D_{W^{*}}[-(I-\Theta W^{*})^{-1}(I-\Theta W^{*})W+(I-\Theta W^{*})^{-1}\Theta D_{W}^{2}]\Gamma(I-\Theta W^{*})^{-1}f\\
&=e^{-it}D_{W^{*}}(I-\Theta W^{*})^{-1}[-(I-\Theta W^{*})W+\Theta D_{W}^{2}]\Gamma(I-\Theta W^{*})^{-1}f\\
&=e^{-it}D_{W^{*}}(I-\Theta W^{*})^{-1}[-W+\Theta W^{*}W+\Theta -\Theta W^{*}W]\Gamma(I-\Theta W^{*})^{-1}f\\
&=e^{-it}D_{W^{*}}(I-\Theta W^{*})^{-1}(\Theta -W)\Gamma(I-\Theta W^{*})^{-1}f\\
&=e^{-it}D_{W^{*}}(I-\Theta W^{*})^{-1}\Theta( I-\Theta^{*} W)\Gamma(I-\Theta W^{*})^{-1}f,
\end{align*}
since $( I-\Theta^{*} W)\Gamma(I-\Theta W^{*})^{-1}=\Gamma$ therefore we have
\begin{align*}
C_{\Gamma}^{'}J_{W}f
&=e^{-it}D_{W^{*}}(I-\Theta W^{*})^{-1}\Theta( I-\Theta^{*} W)\Gamma(I-\Theta W^{*})^{-1}f\\
&=e^{-it}D_{W^{*}}(I-\Theta W^{*})^{-1}\Theta \Gamma f \\
&=D_{W^{*}}(I-\Theta W^{*})^{-1}\Theta e^{it} \Gamma f\\
&=J_{W}C_{\Gamma}f.\qedhere
\end{align*}
\end{proof}


\end{document}